\newtheorem{theorem}{Theorem}
\newtheorem{corollary}[theorem]{Corollary}
\newtheorem{proposition}[theorem]{Proposition}
\theoremstyle{definition}
\newtheorem{definition}[theorem]{Definition}
\newtheorem{remark}[theorem]{Remark}
\begin{document}
\bibliographystyle{natbib}

\title{Expectile based measures of skewness}
\author{Andreas Eberl\footnote{andreas.eberl@kit.edu}, 
Bernhard Klar\footnote{ bernhard.klar@kit.edu} \\
\small{Institute of Stochastics,} \\
\small{ Karlsruhe Institute of Technology (KIT), Germany.}
}


\date{\today }
\maketitle

\begin{abstract}
	In the literature, quite a few 
measures have been proposed for quantifying the deviation of a probability distribution from symmetry. The most popular of these skewness measures are based on the third centralized moment and on quantiles. However, there are major drawbacks in using these quantities. These include a strong emphasis on the distributional tails and a poor asymptotic behaviour for the (empirical) moment based measure as well as difficult statistical inference and strange behaviour for discrete distributions for quantile based measures.
	
Therefore, in this paper, we introduce skewness measures based on or  connected with expectiles. Since expectiles can be seen as smoothed versions of quantiles,  they preserve the advantages over the moment based measure while not exhibiting most of the disadvantages of quantile based measures.
We introduce corresponding empirical counterparts and derive  asymptotic properties. Finally, we conduct a simulation study, comparing the newly introduced measures with established ones, and evaluating the performance of the respective estimators.
\end{abstract}

{\bfseries Keywords:} Expectile, Omega ratio, skewness measure, stop-loss transform.

\section{Introduction}

Symmetry of a distribution or density function is one of the oldest and, at the same time, one of the most important concepts in probability theory and statistics. A real random variable $X$ with cumulative distribution function $F$ is symmetric about $\theta$ if $X-\theta\sim \theta -X$, or, equivalently, if $F(\theta-x)=1-F(\theta+x), x\in\mathbb{R}$.
Over time, a sizeable number of asymmetry or skewness measures has been proposed in the literature to quantify the deviation from symmetry.

The best known skewness measure is certainly the moment based  measure $\gamma_M=E[(X-EX)^3]/Var(X)^{3/2}$, which is often used synonymously with the notion of skewness itself. However, this measure has a number of disadvantages. First, it is so sensitive to the extreme tails of the distribution that it is difficult to estimate accurately in practice when the distribution is markedly skew \citep{hosking}. Second, it can not be  normalized, which makes the according skewness values less informative and comparable. Third, the asymptotic distribution of its empirical counterpart involves moments up to order 6 of the underlying distribution, implying a very slow convergence especially for heavy tailed distributions.
Finally, $\gamma_M=0$ does not characterize symmetry. On the plus side, there is its familiarity, and the fact that this measure is also reasonable for discrete distributions with finite third (or sixth) moment.

A distribution is symmetric if and only if $q_X(1-\alpha)-q_X(1/2)=q_X(1/2)-q_X(\alpha)$ for each $\alpha\in(0,1/2)$, where $q_X(\alpha)$ denotes the $\alpha$-quantile of the distribution of $X$. Hence, one can also provide a quantile based definition of skewness: A distribution is said to be right-skewed (left-skewed) in the quantile sense, if and only if
\begin{equation} \label{quantile_skewness}
q_X(1-\alpha)-q_X(1/2) \ \geq \; (\leq) \ q_X(1/2)-q_X(\alpha), \quad  \alpha\in(0,1/2),
\end{equation}
and equality does not hold for each  $\alpha\in(0,1/2)$. Corresponding scalar measures of skewness are
\begin{equation*}
b_1 = \frac{ q_X(3/4)+q_X(1/4)-2q_X(1/2) }{ q_X(3/4)-q_X(1/4) }
\end{equation*}
(see \cite{yule}, \cite{bowley}) or the general version
\begin{equation*}
b_2(\alpha) = \frac{ q_X(1-\alpha)+q_X(\alpha)-2q_X(1/2) }{ q_X(1-\alpha)-q_X(\alpha) },
\end{equation*}
where $\alpha \in (0,1/2)$, introduced by \cite{dj} (see also \cite{hinkley}, \cite{groeneveld}). As a measure of skewness, $b_1$ can be criticized for being insensitive to the distribution of $X$ any further into the tails than the quartiles \citep{hosking}. Quantiles are not unique for discrete distributions, in particular for empirical distributions.
Furthermore, statistical inference involving the asymptotic distribution of $b_1$ or $b_2$ requires the evaluation of the (unknown) density of the underlying distribution, which typically requires bandwidth selection and, hence, leads to a certain arbitrariness.  Again, rather large sample sizes are desirable to reliably represent the population being sampled.
On the other hand, $b_1$ is a robust and quite intuitive measure, and $b_2(\alpha)\equiv 0$ characterizes symmetry.

Both $b_1$ and $b_2$ satisfy three properties, introduced by \cite{zwet}, which are often seen appropriate for a skewness coefficient $\gamma$ (see \cite{oja}, \cite{groeneveld}, \cite{tajuddin}):
\begin{enumerate}
\item[S1.]
For $c>0$ and $d \in \mathbb{R}$, $\gamma(cX+d) = \gamma(X)$.
\item[S2.]
The measure $\gamma$ satisfies $\gamma(X)=-\gamma(X)$.
\item[S3.]
Let $F$ and $G$, the cdf's of $X$ and $Y$, be absolutely continuous and strictly increasing on $\{x: 0<F(x)<1\}$ and $\{x: 0<G(x)<1\}$, respectively. If $F$ is smaller than $G$ in convex transformation order (written $F\le_2 G$), i.e. $G^{-1}\left(F(x)\right)$ is convex, then $\gamma(X) \leq \gamma(Y)$.
\end{enumerate}
The convex transform order is equivalent to
\begin{align} \label{cto}
&\frac{\left( F^{-1}(w)-F^{-1}(v) \right) - \left( F^{-1}(v)-F^{-1}(u) \right)}{ F^{-1}(w)-F^{-1}(u)}  \nonumber \\
\leq
&\frac{\left( G^{-1}(w)-G^{-1}(v) \right) - \left( G^{-1}(v)-G^{-1}(u) \right)}{ G^{-1}(w)-G^{-1}(u)}
 \quad \forall \ 0<u<v<w<1.
\end{align}
Even though this characterization of the convex transformation order should be known, we were not able to find it in the literature. Thus, we give some details in Section \ref{sec-proofs}. Plugging $v=1/2, w=1-\alpha$ and $u=\alpha$ into (\ref{cto}) shows immediately that $b_2$ satisfies (S3), i.e. it preserves the convex transformation order. A different proof of this fact has been given by \cite{groeneveld}.

On the other hand, the convex transform order is the strongest of all commonly used skewness orders. Hence, also the requirement $b_{2,X}(\alpha)\le b_{2,Y}(\alpha)$ for each $\alpha \in (0,1/2)$ is a very strong one.
As \cite{ag} argue, one might be willing to forgive minor local violations, rather than to insist on a uniform domination of $b_{2,X}$ by $b_{2,Y}$, and such a proposal is made in the following.

To do so, we consider measures of skewness based on or related to expectiles. Amongst others, we propose
\begin{equation*}
 \tilde{s}_2(\alpha) = \frac{ e_X(1-\alpha)+e_X(\alpha)-2e_X(1/2) }{ e_X(1-\alpha)-e_X(\alpha) }, \quad  \alpha\in(0,1/2),
\end{equation*}
where $e_X(\alpha)$ denotes the $\alpha$-expectile of $X$, as a family of  expectile based measure of skewness. Contrary to $\gamma_M$, these measures can be normalized,  characterize symmetry, exist for any distribution with finite first moment, and have very convenient asymptotic properties. Since expectiles can be seen as smoothed quantiles, $\tilde{s}_2(\alpha)$ has a similar interpretation as $b_2(\alpha)$, but is sensitive to the whole distribution of $X$.

There are some further skewness measures with structures similar to $b_2$ and $\tilde{s}_2(\alpha)$. L-skewness, a measure based on L-moments, was introduced by \cite{hosking} and can be written as
\begin{align*}
\tau_3 = \frac{ EX_{3:3} - 2EX_{2:3} + EX_{1:3} }{ EX_{3:3} - EX_{1:3} },
\end{align*}
where $EX_{1:3}\leq EX_{2:3} \leq EX_{3:3}$ denotes the order statistics of a random sample of size 3 from the distribution of $X$. Like $\tilde{s}_2$, it exists whenever $E|X|<\infty$. As for $\gamma_M$, $\tau_3=0$ does not characterize symmetry.

\cite{cj} defined an asymmetry function as follows.
For any $a<b$, they considered the class of all rooted unimodal densities with support $(a, b)$. For any $0<p<1$, there are two points $x_L(p)$ and $x_R(p)$, one each side of the mode $m$, satisfying $f(x_L(p))=f(x_R(p))=pf(m)$. Skewness or asymmetry is measured by
\begin{equation*}
\gamma^\ast(p) = \frac{ x_R(p) -2m + x_L(p) }{ x_R(p) - x_L(p) }.
\end{equation*}
Even if the idea is interesting, it is  difficult to use this measure in practice due to the need of estimating the level points of the density as well as the modal value. Accordingly, not much is known about properties of estimators of this measure. Further, the measure is only defined for a quite restricted class of distributions. Compared with $b_2$, this is a local measure of asymmetry, whereas $\tilde{s}_2(\alpha)$ can be seen as an integral version of $b_2$.

This paper is organized as follows. In Section \ref{sec-expectile}, we recall the definitions of expectiles and some of their properties.
Section \ref{sec-exp-order} formally introduces $\tilde{s}_2$ and discusses its properties. In Section \ref{sec-or}, we introduce related skewness measures based on Omega ratios and stop-loss transforms. Empirical counterparts of the proposed measures are analyzed in Section \ref{sec-esf}.
We illustrate the behavior of the newly proposed skewness measures and their empirical counterparts for some families of distributions in Section \ref{sec-sim}. Most proofs are postponed to Section \ref{sec-proofs}.

\section{Expectiles and expectile location order} \label{sec-expectile}

Throughout the paper we assume that all mentioned random variables $X$ are non-degenerate, have a finite mean (denoted as $X \in L^1$) and are defined on a common probability space $(\Omega,\mathcal{A}, P)$ unless stated otherwise.
Recall that the expectiles $e_X(\alpha)$ of a random variable $X\in L^{2}$ have been defined by \citet{newey} as the minimizers of an asymmetric quadratic loss:%
\begin{equation}
e_X(\alpha)=\arg \min_{t\in \mathbb{R}}\left\{ E \ell_\alpha(X-t) \right\}
\text{,}  \label{exp_def_1}
\end{equation}%
where
\begin{equation*}
\ell_\alpha(x) =
\begin{cases}
\alpha x^2, & \mbox{ if } x \ge 0, \\
(1-\alpha) x^2, & \mbox{ if } x < 0,%
\end{cases}%
\end{equation*}
and $\alpha \in (0,1)$. For $X\in L^{1}$, equation (\ref{exp_def_1}) has to
be modified \citep{newey} to
\begin{equation}
e_X(\alpha) = \arg \min_{t\in \mathbb{R}} \left\{ E\left[ \ell_\alpha(X-t) -
\ell_\alpha(X) \right]\right\}.  \label{exp_def_3}
\end{equation}
The minimizer in (\ref{exp_def_1}) or (\ref{exp_def_3}) is always unique and
is identified by the first order condition%
\begin{equation}
\alpha E\left( X-e_{X}(\alpha )\right) _{+}=(1-\alpha )E\left(
X-e_{X}(\alpha )\right) _{-}\text{,}  \label{exp_def_2}
\end{equation}%
where $x_{+}=\max \{x,0\}$, $x_{-}=\max \{-x,0\}$.
This is equivalent to characterizing expectiles via an identification function, which, for any $\alpha \in (0, 1)$ is defined by
\begin{equation*}
	I_\alpha(x, y) = \alpha (y - x) \mathbbm{1}_{\{y \geq x\}} - (1 - \alpha) (x - y) \mathbbm{1}_{\{y < x\}}
\end{equation*}
for $x, y \in \mathbb{R}$. The $\alpha$-expectile of a random variable $X \in L^1$ is then the unique solution of
\begin{equation*}
	E I_\alpha(t, X) = 0, \quad t \in \mathbb{R}.
\end{equation*}
Similarly, the {\itshape empirical $\alpha$-expectile} $\hat{e}_n(\alpha)$ of a sample $X_1, ..., X_n$ is defined as solution of
\begin{equation*}
	I_\alpha(t, \hat{F}_n) = \frac{1}{n} \sum_{i=1}^{n} I_\alpha(t, X_i) = 0, \quad t \in \mathbb{R}.
\end{equation*}
As quantiles, expectiles are measures of non-central location; we collect some of their properties from \citet{newey} and \citet{bkmr} in the following proposition.

\begin{proposition}\label{exp_properties}
Let $X \in L^{1}$ with distribution function $F$ and $\alpha \in (0,1)$. Then:

\begin{enumerate}
\item[a)]
$e_{X+h}(\alpha )=e_{X}(\alpha )+h$, for each $h\in \mathbb{R}$,
\item[b)]
$e_{\lambda X}(\alpha )=\lambda e_{X}(\alpha )$, for each $\lambda >0$,
\item[c)]
$e_{X}(\alpha )$ is strictly increasing with respect to $\alpha$,
\item[d)]
$e_{X}(\alpha )$ is continuous with respect to $\alpha$,
\item[e)]
$e_{-X}(\alpha )=-e_{X}(1-\alpha )$,
\item[f)]
for continuous cdf $F$, $e_{X}$ has derivative
\begin{equation*}
e_{X}^{\prime}(\alpha )
= \frac{E\left\vert X-e_{X}(\alpha)\right\vert }{(1-\alpha )F(e_{X}(\alpha ))+\alpha \left(1-F(e_{X}(\alpha))\right)},
\end{equation*}
\item[g)]
$X\leq Y$ a.s. $\Rightarrow e_{X}(\alpha )\leq e_{Y}(\alpha )$, for each $\alpha \in (0,1)$.
\end{enumerate}
\end{proposition}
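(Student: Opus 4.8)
The unifying tool for all seven parts is the map $\Phi_X(t,\alpha) := E\,I_\alpha(t,X) = \alpha\,E(X-t)_+ - (1-\alpha)\,E(X-t)_-$, whose unique zero in $t$ is $e_X(\alpha)$ by \eqref{exp_def_2}. The plan is to first record two monotonicity facts and then reduce each assertion to an elementary comparison. For fixed $\alpha$, the map $\Phi_X(\cdot,\alpha)$ is continuous and strictly decreasing on $\{t: 0<F(t)<1\}$ (which contains every expectile), since $t\mapsto E(X-t)_+$ is non-increasing and $t\mapsto E(X-t)_-$ non-decreasing; for fixed $t$, the map $\Phi_X(t,\cdot)$ is strictly increasing, with $\partial\Phi_X/\partial\alpha = E(X-t)_+ + E(X-t)_- = E|X-t|>0$ for non-degenerate $X$. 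Combined with uniqueness of the zero, these two facts drive the whole proof.

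Parts (a), (b) and (e) follow from exact substitution identities for $I_\alpha$. For (a) I would check $I_\alpha(s+h, X+h) = I_\alpha(s,X)$, so that $\Phi_{X+h}(s+h,\alpha)=\Phi_X(s,\alpha)$ vanishes precisely at $s=e_X(\alpha)$; for (b) the identity $I_\alpha(\lambda t,\lambda X)=\lambda\,I_\alpha(t,X)$ for $\lambda>0$ gives $\Phi_{\lambda X}(\lambda t,\alpha)=\lambda\,\Phi_X(t,\alpha)$; and for (e) the relations $(-X-t)_{\pm}=(X+t)_{\mp}$ yield $\Phi_{-X}(t,\alpha)=-\Phi_X(-t,1-\alpha)$, so the defining equation for $e_{-X}(\alpha)$ becomes the one for $e_X(1-\alpha)$ evaluated at $-t$, giving $e_{-X}(\alpha)=-e_X(1-\alpha)$. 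Part (g) is equally direct: the map $y\mapsto I_\alpha(t,y)$ is non-decreasing, so $X\le Y$ a.s.\ gives $\Phi_X(t,\alpha)\le\Phi_Y(t,\alpha)$ for every $t$; evaluating at $t=e_X(\alpha)$ gives $\Phi_Y(e_X(\alpha),\alpha)\ge 0=\Phi_Y(e_Y(\alpha),\alpha)$, and the monotonicity in $t$ together with uniqueness of the zero forces $e_X(\alpha)\le e_Y(\alpha)$.

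For the strict monotonicity in (c), fix $\alpha_1<\alpha_2$ and evaluate $\Phi_X(\cdot,\alpha_2)$ at $e_X(\alpha_1)$: since $\Phi_X(t,\cdot)$ is strictly increasing, $\Phi_X(e_X(\alpha_1),\alpha_2)>\Phi_X(e_X(\alpha_1),\alpha_1)=0$, and the strict decrease in $t$ then forces $e_X(\alpha_2)>e_X(\alpha_1)$. Continuity (d) I would obtain from the joint continuity of $\Phi_X$ together with uniqueness: by (c) the expectiles are monotone, so along any $\alpha_n\to\alpha\in(0,1)$ the values $e_X(\alpha_n)$ eventually lie in the bounded interval $[e_X(\alpha-\delta),e_X(\alpha+\delta)]$; every subsequential limit $L$ satisfies $\Phi_X(L,\alpha)=0$ by passage to the limit, whence $L=e_X(\alpha)$ and the whole sequence converges.

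The genuine work is in (f), and this is where I expect the main obstacle. Here I would differentiate the defining equation $\Phi_X(e_X(\alpha),\alpha)=0$ implicitly in $\alpha$. For continuous $F$ one computes $\tfrac{\partial}{\partial t}E(X-t)_+=-(1-F(t))$ and $\tfrac{\partial}{\partial t}E(X-t)_-=F(t)$, so $\partial\Phi_X/\partial t=-\bigl(\alpha(1-F(t))+(1-\alpha)F(t)\bigr)$, which at $t=e_X(\alpha)$ is exactly the negated denominator of the claimed formula; combined with $\partial\Phi_X/\partial\alpha=E|X-t|$, the implicit function theorem gives $e_X'(\alpha)=-(\partial\Phi_X/\partial\alpha)\big/(\partial\Phi_X/\partial t)$, i.e.\ the asserted expression. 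The delicate points are justifying the differentiation under the expectation in the two one-sided integrals (a Leibniz/dominated-convergence argument, and precisely where continuity of $F$ enters, since atoms would make $t\mapsto E(X-t)_{\pm}$ only one-sided differentiable) and verifying that the denominator is strictly positive so that the implicit function theorem applies, which holds because $0<F(e_X(\alpha))<1$ for $\alpha\in(0,1)$. Note that (f) re-proves (c) via $e_X'(\alpha)>0$ in the continuous case, but the $\Phi$-monotonicity argument above is still needed to cover general $X\in L^1$.
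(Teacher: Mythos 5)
Your proposal is correct, but there is no internal proof to compare it against: the paper does not prove Proposition~\ref{exp_properties}; it explicitly just \emph{collects} these properties from \citet{newey} and \citet{bkmr}. Measured against those references, your route is essentially the standard one: reduce everything to the first-order condition (\ref{exp_def_2}), i.e.\ to the function $\Phi_X(t,\alpha)=\alpha E(X-t)_+-(1-\alpha)E(X-t)_-$, exploit its monotonicity in each argument together with uniqueness of the zero (which the paper states just before the proposition, so you may cite it rather than re-derive it), and obtain f) by implicit differentiation using $\tfrac{\partial}{\partial t}E(X-t)_+=-(1-F(t))$ and $\tfrac{\partial}{\partial t}E(X-t)_-=F(t)$, valid at continuity points of $F$. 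The substitution identities for a), b), e), the comparison argument for g), and the subsequence-plus-uniqueness argument for d) are all sound; note that parts c) and g) do rely on the paper's standing assumption that all random variables are non-degenerate (otherwise $E|X-t|>0$ and strictness can fail), which you correctly flag.

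Two small refinements. First, your stated reason for strict decrease of $\Phi_X(\cdot,\alpha)$ (``$E(X-t)_+$ non-increasing, $E(X-t)_-$ non-decreasing'') only yields non-strict monotonicity; strictness on $\{t:0<F(t)<1\}$ comes from the integral representations $E(X-t)_+=\int_t^\infty(1-F(z))\,dz$ and $E(X-t)_-=\int_{-\infty}^t F(z)\,dz$, which you in any case need for f) — but since you also invoke uniqueness of the zero, the arguments for c) and g) go through regardless. Second, in f) the positivity of the denominator needs no appeal to $0<F(e_X(\alpha))<1$: since $F\in[0,1]$, one always has $\alpha\bigl(1-F(e_X(\alpha))\bigr)+(1-\alpha)F(e_X(\alpha))\geq\min(\alpha,1-\alpha)>0$, so the implicit function theorem applies without further ado.
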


Clearly, expectiles depend only on the distribution of the random variable
$X$; they can be seen as statistical functionals defined on the set of distribution functions with finite mean on $\mathbb{R}$.

Quantiles and expectiles are closely connected as measures of non-central location. \cite{bell-et-al} introduced the expectile (location) order between two random variables: Two random variables $X,Y \in L^1$ are ordered in expectile order (written $X\leq _{e}Y$)
if $e_X(\alpha)\leq e_Y(\alpha) \mbox{ for all } \alpha \in (0,1)$.

It is well known that the usual stochastic order $\leq_{st}$ is equivalent to the pointwise ordering of the quantiles. In view of this, the preceding definition seems quite natural, since quantiles are just replaced by expectiles.

The next theorem shows that the usual stochastic order implies the expectile order, i.e. the ordering of the quantiles implies the ordering of the expectiles.

\begin{theorem} \label{theorem:exp-loc-order}
Let  $X,Y \in L^1$. Then, $X\leq _{st}Y$ implies $X\leq _{e}Y$.
\end{theorem}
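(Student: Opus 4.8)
The plan is to deduce the statement from the monotonicity property (g) of Proposition~\ref{exp_properties} by a coupling argument, and I will also record a self-contained alternative that works directly from the identification function. For the coupling route I would invoke the standard Strassen characterization of the usual stochastic order: $X \leq_{st} Y$ holds if and only if there exist random variables $\tilde X$ and $\tilde Y$ on a common probability space with $\tilde X \stackrel{d}{=} X$, $\tilde Y \stackrel{d}{=} Y$ and $\tilde X \leq \tilde Y$ almost surely. Property (g) then gives $e_{\tilde X}(\alpha) \leq e_{\tilde Y}(\alpha)$ for every $\alpha \in (0,1)$, and since expectiles depend only on the distribution of the underlying variable we have $e_X(\alpha) = e_{\tilde X}(\alpha)$ and $e_Y(\alpha) = e_{\tilde Y}(\alpha)$. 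Hence $e_X(\alpha) \leq e_Y(\alpha)$ for all $\alpha$, which is exactly $X \leq_e Y$.

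The second, fully self-contained route avoids the coupling theorem and exploits the first order condition (\ref{exp_def_2}) directly. For fixed $\alpha$ I introduce, for $Z \in \{X, Y\}$,
\begin{equation*}
g_Z(t) = \alpha\, E\big[(Z-t)_+\big] - (1-\alpha)\, E\big[(Z-t)_-\big], \qquad t \in \mathbb{R},
\end{equation*}
so that by (\ref{exp_def_2}) the expectile $e_Z(\alpha)$ is the unique zero of $g_Z$, and $g_Z$ is continuous and strictly decreasing. Using the stop-loss representations
\begin{equation*}
E\big[(Z-t)_+\big] = \int_t^\infty \big(1 - F_Z(s)\big)\, ds, \qquad E\big[(Z-t)_-\big] = \int_{-\infty}^t F_Z(s)\, ds,
\end{equation*}
valid for $Z \in L^1$, together with the equivalence of $X \leq_{st} Y$ to $F_X(s) \geq F_Y(s)$ for all $s$, I obtain $E[(X-t)_+] \leq E[(Y-t)_+]$ and $E[(X-t)_-] \geq E[(Y-t)_-]$ for every $t$. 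Since $\alpha > 0$ and $1-\alpha > 0$, these two inequalities combine into $g_X(t) \leq g_Y(t)$ for all $t$. Evaluating at $t = e_Y(\alpha)$ yields $g_X(e_Y(\alpha)) \leq g_Y(e_Y(\alpha)) = 0 = g_X(e_X(\alpha))$, and the strict monotonicity of $g_X$ forces $e_X(\alpha) \leq e_Y(\alpha)$, as desired.

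I do not expect a serious obstacle in either route: the coupling argument is essentially immediate once (g) and the Strassen characterization are invoked, so I would lead with it for brevity. In the direct argument the only points requiring a little care are the passage from the pointwise ordering $F_X \geq F_Y$ to the two simultaneous stop-loss inequalities (for the positive and the negative part), which relies on the integral representations above and the signs of the weights $\alpha$ and $1-\alpha$, and the verification that $g_X$ is genuinely strictly decreasing (so that $g_X(e_Y(\alpha)) \leq g_X(e_X(\alpha))$ actually yields $e_X(\alpha) \leq e_Y(\alpha)$). The latter follows because, for $t_1 < t_2$, the two stop-loss components cannot both be constant on $[t_1,t_2]$ for a non-degenerate variable. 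I would present the coupling proof in the main text and remark that the direct version additionally clarifies the role of the stop-loss transforms that reappear in Section~\ref{sec-or}.
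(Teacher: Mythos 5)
Your lead argument is precisely the paper's own proof: the paper observes that the theorem follows from Proposition~\ref{exp_properties}~g) combined with the coupling characterization of $\leq_{st}$ (citing \cite{bellini} for an alternative order-theoretic proof), which is exactly your Strassen route, so there is nothing to add there. Your second, direct route is also correct and is genuinely different from anything the paper presents: the pointwise inequality $F_X \geq F_Y$ passes through the stop-loss representations to give $g_X(t) \leq g_Y(t)$ for all $t$, and evaluating at $t = e_Y(\alpha)$ together with strict monotonicity of $g_X$ yields the claim. One small sharpening: strict monotonicity of $g_X$ needs no non-degeneracy argument, since for $t_1 < t_2$ one has the identity $\left[(z-t_1)_+ - (z-t_2)_+\right] + \left[(z-t_2)_- - (z-t_1)_-\right] = t_2 - t_1$ with both bracketed terms nonnegative, whence $g_X(t_1) - g_X(t_2) \geq \min(\alpha, 1-\alpha)\,(t_2 - t_1) > 0$ for any integrable $X$. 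The direct route buys self-containedness (no appeal to the coupling theorem or to Proposition~\ref{exp_properties}~g)) and, as you note, makes explicit the stop-loss mechanism that the paper only exploits later in Section~\ref{sec-or}; the coupling route is shorter but leans on the quoted coupling characterization.
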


This has been proved by \cite{bellini} by an order-theoretic comparative static approach. It follows also directly from Theorem \ref{exp_properties} g) and the well-known fact that $X\leq _{st}Y$ is equivalent to the existence of random variables $\hat{X}$ and $\hat{Y}$, defined on the same probability space, with $\hat{X} =_{st} X, \hat{Y} =_{st} Y,$ and $P(\hat{X} \leq \hat{Y})=1$
(here $=_{st}$ denotes equality in law).

\section{An expectile based ordering with respect to skewness} \label{sec-exp-order}

The distribution of a random variable $X$ is symmetric around $\theta$ if $X-\theta \sim \theta-X$.
Since a distribution is uniquely determined by the expectile function, and since the mean $\mu=e_X(1/2)$ of $X$ coincides with the center of symmetry for a symmetric distribution, it follows that a distribution is symmetric if and only if
$e_{X-\mu}(\alpha)=e_{\mu-X}(\alpha)$ for each $\alpha\in(0,1)$.
Using properties a) and e) in Proposition \ref{exp_properties}, this is equivalent to
$$
e_X(1-\alpha)-\mu=\mu-e_X(\alpha)  \quad \mbox{for each} \quad \alpha\in(0,1/2).
$$
In analogy to (\ref{quantile_skewness}), this leads to the following definition of expectile based skewness.

\begin{definition} \label{expectile-skew}
A distribution is called right-skewed (left-skewed) in the expectile sense, if and only if
\begin{equation} \label{skew-cond1}
e_X(1-\alpha)-\mu \ \geq \; (\leq) \ \mu-e_X(\alpha), \quad  \alpha\in(0,1/2),
\end{equation}
and equality does not hold for each  $\alpha\in(0,1/2)$.
\end{definition}

Corresponding scalar measures of skewness are
\begin{equation} \label{skew-coef1}
\tilde{s}_1 = \frac{ e_X(3/4)+e_X(1/4)-2\mu }{e_X(3/4)-e_X(1/4)}
\end{equation}
or the general version
\begin{equation} \label{skew-coef2}
\tilde{s}_2(\alpha) = \frac{ e_X(1-\alpha)+e_X(\alpha)-2\mu }{ e_X(1-\alpha)-e_X(\alpha) }, \quad  \alpha\in(0,1/2).
\end{equation}
The numerator of (\ref{skew-coef2}) is the difference of two positive numbers $e_X(1-\alpha)-\mu$ and $\mu-e_X(\alpha)$, while the denominator is the sum of these numbers. Hence, $-1\leq \tilde{s}_2(\alpha)\leq 1$.

Note that the following decomposition holds for the expectile as a measure of non-central tendency:
\begin{eqnarray} \label{exp_decomp}
e_X(1-\alpha) &=& e_X(1/2) + 1/2\left\{e_X(1-\alpha)-e_X(\alpha)\right\} \nonumber \\
 && + 1/2\left\{e_X(1-\alpha)+e_X(\alpha)-2e_X(1/2) \right\}.
\end{eqnarray}
This is the counterpart to the decomposition given in \cite{bk} for the quantile. The first term in (\ref{exp_decomp}), the mean, is a measure of central location; the second term, half of the expectile distance $e_X(1-\alpha)-e_X(\alpha)$, is a measure of variability for $\alpha<1/2$. Finally, the third term, which is essentially the numerator in (\ref{skew-coef2}), is zero for symmetric distributions and, hence, quantifies the deviations from symmetry.

\medskip \noindent
The actual range of $\tilde{s}_2(\alpha)$ depends on $\alpha$ and can be considerably smaller than the interval $[-1, 1]$. This is specified in the following result whose proof can be found in Section \ref{sec-proofs}.

\begin{proposition}	\label{thm:exlSkewStd}
Let $\alpha \in (0, 1/2)$. Then, $-1 + 2\alpha < \tilde{s}_2(\alpha) < 1 - 2\alpha$ for all random variables, for which $\tilde{s}_2(\alpha)$ is defined, and both bounds cannot be improved.
\end{proposition}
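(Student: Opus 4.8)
The plan is to put $\tilde{s}_2(\alpha)$ in a normalized form and reduce each bound to a comparison of stop-loss type quantities. First I would set $a = e_X(1-\alpha) - \mu$ and $b = \mu - e_X(\alpha)$. By the strict monotonicity of $\alpha \mapsto e_X(\alpha)$ (Proposition \ref{exp_properties} c)) together with $\alpha < 1/2 < 1-\alpha$ and $\mu = e_X(1/2)$, we have $e_X(\alpha) < \mu < e_X(1-\alpha)$, hence $a, b > 0$ and $\tilde{s}_2(\alpha) = (a-b)/(a+b)$. Since $a+b>0$, an elementary rearrangement shows that the right bound $\tilde{s}_2(\alpha) < 1 - 2\alpha$ is equivalent to $(1-\alpha) b > \alpha a$, and the left bound $\tilde{s}_2(\alpha) > -1 + 2\alpha$ is equivalent to $(1-\alpha) a > \alpha b$.

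Next I would translate these two inequalities through the first order condition (\ref{exp_def_2}). Writing $P(t) = E(X-t)_+$ and $M(t) = E(X-t)_-$, so that $P(t) - M(t) = \mu - t$, I would insert (\ref{exp_def_2}) at $t = e_X(\alpha)$ and at $t = e_X(1-\alpha)$ and solve the resulting linear relations. Using $1-2\alpha>0$, this gives
\begin{equation*}
P(e_X(\alpha)) = \frac{(1-\alpha) b}{1 - 2\alpha}, \qquad P(e_X(1-\alpha)) = \frac{\alpha a}{1 - 2\alpha},
\end{equation*}
and likewise $M(e_X(\alpha)) = \alpha b/(1-2\alpha)$ and $M(e_X(1-\alpha)) = (1-\alpha) a/(1-2\alpha)$. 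Consequently the right bound is equivalent to $P(e_X(\alpha)) > P(e_X(1-\alpha))$ and the left bound to $M(e_X(1-\alpha)) > M(e_X(\alpha))$.

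Both of these are then immediate from monotonicity, since $P$ is strictly decreasing on $(-\infty, \esssup X)$ and $M$ is strictly increasing on $(\essinf X, \infty)$. For non-degenerate $X$ and $\alpha\in(0,1)$ the expectile $e_X(\alpha)$ lies strictly between $\essinf X$ and $\esssup X$ — which itself follows from (\ref{exp_def_2}), as $e_X(\alpha) = \esssup X$ would force $E(X - e_X(\alpha))_+ = 0 < E(X-e_X(\alpha))_-$, contradicting the identity — so both $e_X(\alpha)$ and $e_X(1-\alpha)$ lie in the open interval and satisfy $e_X(\alpha) < e_X(1-\alpha)$; the strict inequalities follow. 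I expect the only point requiring care to be the justification of strictness, i.e. that there is positive mass on the relevant side of each expectile. The integral representations $P(t) = \int_t^\infty (1-F)\,ds$ and $M(t) = \int_{-\infty}^t F\,ds$ make this transparent, since the integrands are strictly positive on $(e_X(\alpha), e_X(1-\alpha))$.

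Finally, for sharpness I would exhibit an explicit family approaching each bound. For the two-point distribution with $P(X = c) = p$ and $P(X = 0) = 1 - p$ (any $c > 0$), all expectiles of levels in $(0,1)$ lie in $(0,c)$, and a direct computation from (\ref{exp_def_2}) gives $\tilde{s}_2(\alpha) = (1 - 2\alpha)(1 - 2p)$, independent of $c$ as scale invariance (Proposition \ref{exp_properties} b)) predicts. Letting $p \to 0$ drives $\tilde{s}_2(\alpha)$ up to $1 - 2\alpha$, and $p \to 1$ drives it down to $-1 + 2\alpha$; since $p$ ranges over the open interval $(0,1)$ these values are approached but never attained, so neither bound can be improved.
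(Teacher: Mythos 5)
Your proof is correct and takes essentially the same route as the paper's: both reduce the bounds to the inequalities $\alpha\,(e_X(1-\alpha)-\mu) < (1-\alpha)(\mu - e_X(\alpha))$ and $\alpha\,(\mu - e_X(\alpha)) < (1-\alpha)(e_X(1-\alpha)-\mu)$ via the first order condition (\ref{exp_def_2}), establish these by strict monotonicity of $t \mapsto E(X-t)_+$ and $t \mapsto E(X-t)_-$, and prove sharpness with the same Bernoulli-type two-point family giving $\tilde{s}_2(\alpha) = (1-2\alpha)(1-2p)$. The only difference is direction of presentation (you start from the bounds and work back; the paper starts from the monotonicity inequalities), plus your slightly more explicit justification that expectiles lie strictly inside the essential range, which the paper leaves terse.
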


Based on this result we redefine our expectile based skewness measure as
\begin{equation*}
	s_2(\alpha) = \frac{1}{1 - 2\alpha} \tilde{s}_2(\alpha), \quad \alpha \in (0, 1/2).
\end{equation*}
Then, $-1 < s_2(\alpha) < 1$, and both inequalities are sharp for any $\alpha \in (0, 1/2)$. Further, we define $s_1 = s_2(1/4) = 2 \tilde{s_1}$.

\medskip \noindent

Clearly, for the comparison of the skewness of two random variables, it doesn't matter if we use $\tilde{s}_2$ or $s_2$. Hence, we say that $Y$ is more skewed to the right than $X$ in the expectile sense if
$\tilde{s}_{2,X}(\alpha) \leq \tilde{s}_{2,Y}(\alpha)$ for each $\alpha \in (0,1/2)$.
Using the properties of expectiles, it is clear that $\tilde{s}_2(\alpha)$ satisfies the skewness properties S1 and S2. The validity of S3 is an open question.

An even more general definition of an expectile based skewness order would be the analogue to display (\ref{cto}); however, this seems to be rather unmanageable in applications.

\subsection{The limiting case $\alpha\to 1/2$}

In this section, we examine the behaviour of $s_2(\alpha)$ if $\alpha$ approaches its upper bound $1/2$. For this, we assume that the cdf $F$ of $X$ is differentiable with density $f$; then, $e_X(\alpha)$ is twice differentiable by Theorem \ref{exp_properties}f). Note that we can rewrite $s_2$ as a ratio of first- and second-order difference quotients
\begin{equation} \label{diff-quot}
s_2(\alpha) = 1/2 \frac{(e_X(1/2 + h) - 2 e_X(1/2) + e_X(1/2 - h))/h^2}{(e_X(1/2 + h) - e_X(1/2 - h))/h},
\end{equation}
where $h = 1/2 - \alpha$. Splitting the central difference in the denominator into a forward and a backward difference, and taking the left limit yields
\begin{equation*}
\lim\limits_{\alpha \rightarrow 1/2-} s_2(\alpha) = \frac{e_X''(1/2)}{4 e_X'(1/2)}.
\end{equation*}
By Proposition \ref{exp_properties}f), $e_X'(1/2) = 2 E|X - \mu| = 2 \delta_X$, say. For the calculation of $e_X''(1/2)$, we denote numerator and denominator of $e_X'(\alpha)$ in Theorem \ref{exp_properties}f) by $u(\alpha)$ and $v(\alpha)$, respectively.
Then, $\lim_{\alpha \rightarrow 1/2} u(\alpha) = \delta_X$ and $\lim_{\alpha \rightarrow 1/2} v(\alpha) = 1/2$ as well as
\begin{align*}
u'(\alpha) &= e_X'(\alpha) (2 F(e_X(\alpha)) - 1) \rightarrow e_X'(1/2) (2 F(\mu) - 1),\\
v'(\alpha) &= (1 - 2 F(e_X(\alpha))) + (1 - 2 \alpha) f(e_X(\alpha)) e_X'(\alpha) \rightarrow 1 - 2 F(\mu),
\end{align*}
for $\alpha \rightarrow 1/2$. By combining these results, it follows
\begin{equation*}
e_X''(1/2) = \lim\limits_{\alpha \rightarrow 1/2} \frac{u'(\alpha) v(\alpha) - u(\alpha) v'(\alpha)}{(v(\alpha))^2} = 8 \delta_X (2 F(\mu) - 1),
\end{equation*}
which overall yields
\begin{equation} \label{s3}
s_3 = \lim_{\alpha \rightarrow 1/2-} s_2(\alpha) = 2 F(\mu) - 1.
\end{equation}
Apparently, $s_3$ can also be used as a measure of skewness; in fact, it has already been introduced as such by \cite{tajuddin}. Besides, the quantity $F(\mu)$ is the theoretical counterpart of the test statistic of the sign test for symmetry with estimated center \citep{gastwirth}.
The measure $s_3$ exploits the idea that the difference between mean $\mu$ and median $q_{1/2}$ indicates the skewness of the underlying distribution, which is also prevalent in other popular skewness measures like $(\mu - q_{1/2})/\sigma$ or $(\mu - q_{1/2})/E|X - q_{1/2}|$. Since a substitution of the mean by the median in $s_3$ always results in the value $0$ for continuous distributions, a positive value of $\mu - q_{1/2}$ yields a positive value of $s_3$ and thus right-skewness and vice versa.

It is easy to see that $s_3$ satisfies the skewness properties S1 generally and S2 under the assumption $P(X = \mu) = 0$. For continuous distributions, the crucial property S3 follows from Jensen's inequality.

Besides its simplicity, an argument for the use of $s_3$ is that $s_2'(\alpha)$ converges to zero as $\alpha$ tends to $1/2$, so $s_2(\alpha)$ flattens out towards $s_3$. This means that, at least for values of $\alpha$ close to $1/2$, $s_3$ is close to and thereby representative for a range of values of $s_2(\alpha)$ without the need of a specific choice of the parameter $\alpha$. This result on the gradient of $s_2$ can be proved under the assumption of $e_X \in C^4((0, 1))$ (which is equivalent to the assumption that the density of $X$ is twice differentiable) as follows.

First, we differentiate $s_2$ with respect to $\alpha$, which yields
\begin{align*}
	s_2'(\alpha) =& \ 2 \frac{e_X'(\alpha) (e_X(1 - \alpha) - e_X(1/2)) - e_X'(1 - \alpha) (e_X(1/2) - e_X(\alpha))}{(1 - 2 \alpha) (e_X(1 - \alpha) - e_X(\alpha))^2}\\
	&+ 2 \frac{e_X(1 - \alpha) - 2 e_X(1/2) + e_X(\alpha)}{(1 - 2 \alpha)^2 (e_X(1 - \alpha) - e_X(\alpha))}
\end{align*}
for $\alpha \in (0, 1/2)$. Using the notation $h = 1/2 - \alpha$, this can once again be rewritten as a composition of difference quotients. Now, using Taylor expansions for each of them such that the remainders are of order $O(h^3)$ in the numerators as well as in the denominators yields after some computations
\begin{align*}
s_2'(\alpha) =& \ 1/4 \frac{- e_X'(1/2) e_X''(1/2) h + O(h^3)}{(e_X'(1/2))^2 h^2 + O(h^3)}
+ 1/4 \frac{e_X''(1/2) + 1/12 e_X^{(4)}(1/2) h^2 + O(h^3)}{e_X'(1/2) h + O(h^3)}\\
=& \ 1/4 \frac{O(h^3)}{e_X'(1/2) h^2 + O(h^3)},
\end{align*}
where we used for the second equality that $e_X'(1/2) > 0$ by Proposition \ref{exp_properties}c). Then, taking the limit $h\to 0$ yields the asserted result $\lim_{\alpha \to 1/2} s_2'(\alpha) = \lim_{h \to 0} s_2'(\alpha) = 0$.


\section{Relation to Omega ratios and stop-loss transforms} \label{sec-or}

In this section, we give conditions based on Omega ratios and stop-loss transforms which are equivalent to Definition \ref{expectile-skew}.

Expectiles are related to the Omega ratio, which
has been introduced in the financial literature by \citet{keating} as
\begin{equation}
\Omega _{X}(t)=\frac{E\left( X-t\right) _{+}}{E\left( X-t\right) _{-}}\text{.%
}  \label{omega}
\end{equation}
Then, equation (\ref{exp_def_2}) can be written as
\begin{equation}
\Omega _{X}(e_X(\alpha))=\frac{1-\alpha }{\alpha }  \label{exp_omega}
\end{equation}%
\citep{remillard}, which gives the following one-to-one relation between expectiles and Omega
ratios:
\begin{equation*}
e_X(\alpha) =\Omega _{X}^{-1}\left( \frac{1-\alpha }{\alpha } \right) \text{,%
} \quad \Omega _{X}(t) = \frac{1-e_X^{-1}(t) }{e_X^{-1}(t) } .
\label{exp_omega_2}
\end{equation*}
Arguing as before Definition \ref{expectile-skew}, condition (\ref{skew-cond1}) for right-skewness is equivalent to
\begin{equation*}
e_{X-\mu}(\alpha) \ \geq \ e_{-(X-\mu)}(\alpha), \quad  \alpha\in(1/2,1).
\end{equation*}
For $\alpha\in(1/2,1)$, put $\beta =(1-\alpha)/\alpha\in(0,1)$. From equation (\ref{exp_omega}), the condition $e_{-(X-\mu)}(\alpha) \leq e_{X-\mu}(\alpha)$ is equivalent to
\begin{equation} \label{exp_comp}
\Omega _{-(X-\mu)}(x)=\beta \text{, }\Omega _{X-\mu}(y)=\beta \ \Rightarrow \  x\leq y.
\end{equation}%
Since $\Omega _{-(X-\mu)}(0)=\Omega _{X-\mu}(0)=1$, since $\Omega _{-(X-\mu)}$ and $\Omega _{X-\mu}$ are strictly decreasing and since (\ref%
{exp_comp}) holds for each $\beta \in (0,1)$, it follows that
\begin{equation} \label{skew-cond2}
\Omega _{-(X-\mu)}(t)\leq \Omega _{X-\mu}(t)\text{ for all } t\in (0,\infty).
\end{equation}
Using $\Omega _{X-\mu}(t)=\Omega _{X}(\mu+t)$ and $\Omega_{-(X-\mu)}(t)=1/\Omega_{X}(\mu-t)$, we finally obtain that (\ref{skew-cond2}), and, hence, condition (\ref{skew-cond1}) for right-skewness, is equivalent to
\begin{equation} \label{skew-cond3}
\Omega_{X}(\mu+t) \cdot \Omega_{X}(\mu-t) \geq 1 \quad \mbox{for all} \; t>0.
\end{equation}
Further note that a distribution is symmetric if and only if equality holds in (\ref{skew-cond3}) for every $t>0$.

\medskip
The Omega ratio, in turn, is closely related to the  {\em stop-loss transform}
$\pi_X(t) = E(X-t)_+$, which is well known in the actuarial literature (see e.g. \cite{mueller}), since it describes the expected cost of a stop-loss insurance contract with deductible $t$ for a risk $X$. From
\begin{equation*}
E(X-t)_- = t - EX + E(X-t)_+
\end{equation*}
we immediately get
\begin{equation} \label{or-slt}
\Omega_X(t) = \frac{\pi_X(t)}{t-EX + \pi_X(t)}.
\end{equation}
Plugging (\ref{or-slt}) into condition (\ref{skew-cond3}), we obtain the following result:

\begin{theorem} \label{slt-symmetry}
\begin{enumerate}
\item[a)]
The distribution of a random variable $X$ with cdf $F$ and finite mean $\mu=EX$ is symmetric around $\mu$ if and only if
\begin{equation*}
S_X(t) = \frac{1}{t} \left\{ \pi_X(\mu+t) - \pi_X(\mu-t) \right\} + 1\ = \ 0  \quad \text{for each } t>0.
\end{equation*}
\item[b)]
A distribution is right-skewed (left-skewed) in the sense of Definition \ref{expectile-skew} if and only if
\begin{equation*}
S_X(t) \ \geq (\leq) \ 0  \quad \text{for each } t>0.
\end{equation*}
\end{enumerate}
\end{theorem}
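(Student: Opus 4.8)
The plan is to build directly on the chain of equivalences established immediately before the statement: condition (\ref{skew-cond1}) for right-skewness has already been shown equivalent to (\ref{skew-cond3}), namely $\Omega_X(\mu+t)\,\Omega_X(\mu-t)\ge 1$ for all $t>0$, and symmetry around $\mu$ to the corresponding equality for every $t>0$. It therefore suffices to prove, for each fixed $t>0$, the purely algebraic equivalence
\[
\Omega_X(\mu+t)\,\Omega_X(\mu-t)\ \ge\ 1 \qquad\Longleftrightarrow\qquad S_X(t)\ \ge\ 0,
\]
together with the analogous statements obtained on replacing ``$\ge$'' by ``$=$'' and by ``$\le$''; parts a) and b) then follow at once.

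First I would substitute (\ref{or-slt}) into the product. Writing $a=\pi_X(\mu+t)$ and $b=\pi_X(\mu-t)$ and using $EX=\mu$ to simplify $(\mu\pm t)-EX=\pm t$, formula (\ref{or-slt}) gives $\Omega_X(\mu+t)=a/(t+a)$ and $\Omega_X(\mu-t)=b/(b-t)$, so that
\[
\Omega_X(\mu+t)\,\Omega_X(\mu-t)\;=\;\frac{ab}{(t+a)(b-t)}.
\]
Here $a\ge 0$ and $b\ge 0$ since the stop-loss transform is nonnegative, and both are finite because $X\in L^1$. Moreover $t+a>0$, while $b-t=E\bigl(X-(\mu-t)\bigr)_-\ge 0$ is exactly the (nonnegative) denominator of $\Omega_X(\mu-t)$. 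Assuming $b-t>0$ (so that $\Omega_X(\mu-t)$ is finite and the strict monotonicity invoked in passing from (\ref{exp_comp}) to (\ref{skew-cond3}) applies), I would clear denominators: since $(t+a)(b-t)>0$, the inequality $\Omega_X(\mu+t)\Omega_X(\mu-t)\ge 1$ is equivalent to $ab\ge (t+a)(b-t)$. Expanding the right-hand side and cancelling the common $ab$ term leaves $0\ge t(b-t-a)$, and dividing by $t>0$ yields $a-b+t\ge 0$.

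On the other hand, directly from its definition $S_X(t)=\tfrac1t(a-b)+1=(a-b+t)/t$, so $S_X(t)\ge 0\Leftrightarrow a-b+t\ge 0$ because $t>0$. This establishes the desired equivalence; replacing each ``$\ge$'' by ``$=$'' gives the symmetry characterization of part a), and reversing the inequalities gives the left-skewed case of part b). For the strict right-/left-skewness of Definition \ref{expectile-skew}, the additional clause that equality not hold for every $\alpha$ translates into $S_X$ not being identically zero, which is consistent with the characterization.

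The one point requiring care, and the only real obstacle, is the degenerate factor $b-t=E\bigl(X-(\mu-t)\bigr)_-$. If this vanishes for some $t$, then $\Omega_X(\mu-t)$ is infinite and the cross-multiplication above is not literally valid; such $t$ occur only when $X\ge \mu-t$ almost surely, i.e.\ for supports bounded below. I would dispose of these by the same monotonicity and limiting considerations already used to derive (\ref{skew-cond3}): approximating from values of $t$ where $b-t>0$ and using that $t\mapsto\pi_X(\mu\pm t)$ is continuous (a consequence of $X\in L^1$), the sign equivalence extends to the boundary. A direct verification of part a) is also available and worth recording, since it sidesteps this case entirely: if $X$ is symmetric about $\mu$ then $Y=X-\mu$ satisfies $E(Y-t)_+=E(Y+t)_-=E(Y+t)_+-t$, whence $a-b=-t$ and $S_X(t)\equiv 0$, giving the ``only if'' direction of a) without any case distinction.
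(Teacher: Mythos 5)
Your proposal is correct and follows essentially the same route as the paper: the paper's proof of this theorem consists precisely of substituting the relation (\ref{or-slt}) into the already-derived condition (\ref{skew-cond3}), which is the algebraic computation you carry out explicitly (and your handling of the degenerate case $E\bigl(X-(\mu-t)\bigr)_-=0$, plus the direct verification of the ``only if'' part of a), is extra care the paper glosses over).
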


The following proposition collects some properties of the skewness function $S_X$.

\begin{proposition} \label{skewness-function}
Let $X$ be random variable with cdf $F$ and finite mean $\mu$. Then:
\begin{enumerate}
  \item[a)] $\lim_{t\to\infty} S_X(t) = 0$.
  \item[b)]
$S_X(t) = \frac{1}{t} \int_{\mu-t}^{\mu + t} F_X(z) dz - 1$.
  \item[c)]
  $ -1 \ \leq \  S_X(t)  \ \leq \ 1.$
\end{enumerate}
\end{proposition}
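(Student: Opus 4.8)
The plan is to prove part (b) first, because parts (a) and (c) both fall out of it immediately. The one external ingredient I need is the standard tail representation of the stop-loss transform for $X \in L^1$, namely $\pi_X(t) = E(X-t)_+ = \int_t^\infty \left(1 - F(x)\right)\,dx$; this follows from the layer-cake identity $EY = \int_0^\infty P(Y > y)\,dy$ applied to the nonnegative variable $Y = (X-t)_+$. With this in hand I would compute the difference of stop-loss transforms appearing in the definition of $S_X$,
\[
\pi_X(\mu+t) - \pi_X(\mu-t) = -\int_{\mu-t}^{\mu+t} \left(1 - F(x)\right)\,dx = -2t + \int_{\mu-t}^{\mu+t} F(x)\,dx,
\]
and substitute into $S_X(t) = \frac{1}{t}\left\{\pi_X(\mu+t) - \pi_X(\mu-t)\right\} + 1$ to obtain $S_X(t) = \frac{1}{t}\int_{\mu-t}^{\mu+t} F(x)\,dx - 1$, which is exactly assertion (b).

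Part (c) is then an immediate consequence of (b): since $0 \le F(z) \le 1$, the integral $\int_{\mu-t}^{\mu+t} F(z)\,dz$ lies between $0$ and $2t$, so its division by $t$ lies in $[0,2]$, and subtracting $1$ yields $-1 \le S_X(t) \le 1$.

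For part (a) I would symmetrize the integral from (b) through the substitution $z = \mu \pm r$, which folds the domain onto $[0,t]$ and gives
\[
S_X(t) = \frac{1}{t}\int_0^t \left(F(\mu + r) + F(\mu - r) - 1\right)\,dr.
\]
As $r \to \infty$ we have $F(\mu + r) \to 1$ and $F(\mu - r) \to 0$, so the integrand tends to $0$, while by monotonicity of $F$ it is bounded by $1$ in absolute value throughout. The remaining step is a standard Cesàro (bounded-convergence) argument showing that the average over $[0,t]$ of a bounded function tending to $0$ itself tends to $0$. I expect this to be the only genuine, if minor, obstacle: one fixes $\epsilon > 0$, chooses $R$ so that the integrand is below $\epsilon$ in modulus for $r > R$, bounds the contribution of $[0,R]$ by $R/t$ and that of $[R,t]$ by $\epsilon$, and then lets $t \to \infty$ followed by $\epsilon \to 0$. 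This completes (a) and hence the proposition.
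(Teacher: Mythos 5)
Your proposal is correct. For parts (b) and (c) it coincides with the paper's own proof: the paper likewise uses the tail representation $\pi_X(s)=\int_s^\infty(1-F_X(z))\,dz$, collapses the difference $\pi_X(\mu+t)-\pi_X(\mu-t)$ into $-\int_{\mu-t}^{\mu+t}(1-F_X(z))\,dz$, and reads off (c) from $0\le F_X\le 1$. Where you genuinely differ is part (a). The paper proves (a) directly from the definition of $S_X$, independently of (b): it notes $\pi_X(\mu+t)\to 0$ and rewrites $\pi_X(\mu-t)-t=E\left[\max\{X-\mu,-t\}\right]$, which tends to $E[X-\mu]=0$ by monotone convergence, so the entire numerator $t\,S_X(t)=\pi_X(\mu+t)-\pi_X(\mu-t)+t$ tends to $0$. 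You instead fold the integral from (b) onto $[0,t]$ and run a Ces\`aro truncation argument on the bounded integrand $F(\mu+r)+F(\mu-r)-1\to 0$; that step is carried out correctly (the $R/t+\epsilon$ bound is exactly what is needed). The trade-off is this: your route is more elementary and self-contained, since past (b) it uses only the pointwise limits of $F$ at $\pm\infty$ and boundedness, with no convergence theorem for expectations; the paper's route is shorter and yields a strictly stronger conclusion, namely $t\,S_X(t)\to 0$, i.e. $S_X(t)=o(1/t)$, whereas the averaging argument gives $S_X(t)\to 0$ without any rate.
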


\begin{proof}
Since $\lim_{t\to \infty} \pi_X(\mu+t)=0$, and since the monotone convergence theorem implies
\begin{eqnarray*}
\lim_{t\to \infty} \left\{ \pi_X(\mu-t)-t\right\}
&=& \lim_{t\to \infty} E\left[\max\{X-\mu,-t\}\right] = 0,
\end{eqnarray*}
we obtain $\lim_{t\to\infty} S_X(t) = 0$. Further,
\begin{eqnarray*}
S_X(t) &=& \frac{1}{t} \left\{ \int_{\mu+t}^{\infty} \bar{F}_X(z) dz - \int_{\mu-t}^{\infty} \bar{F}_X(z) dz \right\} + 1 \\
&=& 1 - \frac{1}{t} \int_{\mu-t}^{\mu + t} \bar{F}_X(z) dz
\ = \ \frac{1}{t} \int_{\mu-t}^{\mu + t} F_X(z) dz - 1,
\end{eqnarray*}
where $\bar{F}_X(z)=1-F_X(z)$ denotes the survivor function. Part c) follows directly from b).
\end{proof}

\begin{figure}
\centering{
\includegraphics[scale=0.5]{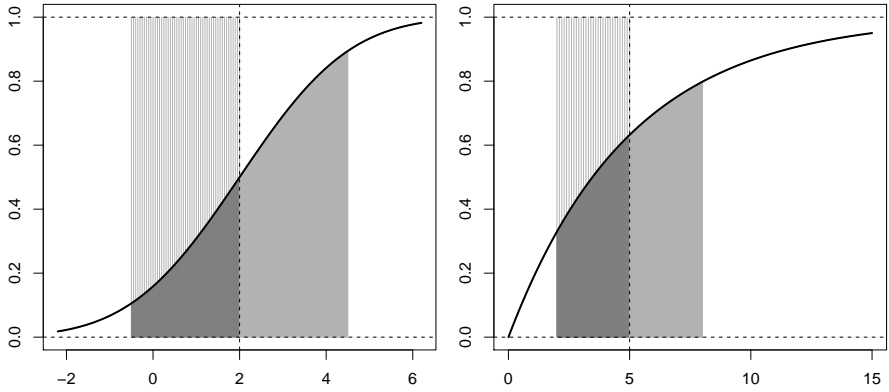}
\caption{\label{fig2} Area below $F_X(z)$ for $z\in [\mu-t,\mu+t]$ for a (symmetric) normal distribution $N(2,4)$ (left panel) and a right-skewed exponential distribution with mean 5 (right panel).}
}
\end{figure}

Figure \ref{fig2} illustrates the area below $F_X(z)$ for $z\in [\mu-t,\mu+t]$ for a (symmetric) normal distribution $N(2,4)$ (left panel, $t=2.5$) and a right-skewed exponential distribution with mean 5 (right panel, $t=3$). For the normal distribution, the gray areas below $F_X$ sum up to $t=2.5$, whereas the sum is larger than $t=3$ in case of the exponential distribution.

\begin{remark}
	\begin{enumerate}
		\item[(i)]
		The representation of $S_X$ in Proposition \ref{skewness-function}c) bears some similarity to skewness functionals defined in \cite{ag}. In particular, they proposed
		\begin{align*}
			\lambda_X(u) &= \int_{0}^{u} F^{-1}(1/2+v)+F^{-1}(1/2-v) dv, \ 0<u<1/2,
		\end{align*}
		as skewness function. Note, however, that this is a skewness measure with respect to the median, whereas $S_X$ is a measure with respect to the mean (cf. \cite{macgillivray}).
		
		\item[(ii)]
		Note that $S_X(t)=2 \int_{\mathbb{R}} F_X(z) dH(z) - 1$,
		where $H$ is the cdf of the uniform distribution on $(\mu-t,\mu+t)$.
		Replacing $H$ by the Dirac measure in $\mu$ results in $s_3$ in (\ref{s3}).
		Another reasonable choice for $H$ would be any cdf with unimodal density, that is symmetric around $\mu$, for example a normal distribution with mean $\mu$.
	\end{enumerate}
\end{remark}

The skewness function $S_X(t)$ is location invariant, but not scale invariant.
This is not an issue if one analyzes the skewness of a single distribution.
However, scale invariance is essential for a meaningful comparison between several distributions. As a scale invariant modification, we propose
\begin{equation} \label{tildeS}
\tilde{S}_X(t) \ = \ S_X(t\delta_X),
\end{equation}
where $\delta_X=E|X-EX|$ denotes the mean absolute deviation from the mean (MAD). This dispersion measure is strongly related to the stop-loss transform, since $\pi_X(EX)=\delta_X/2$ is just the absolute semideviation.
In principle, one could use any other dispersion measure $\sigma_X$ satisfying $\sigma_{cX}=c\sigma_X$ for $c>0$ instead of  $\delta_X$, but the latter is particularly suitable for our purpose.

\medskip
From now on we assume that the cdf's are absolutely continuous and strictly increasing (i.e. $F$ is strictly increasing on $\{  x:0<F(x)<1 \}$).
\citet{oja} showed that $F$ and $G$ are strongly skewness comparable (i.e. $F\leq_2 G$ or $G\leq_2 F$) if and only if $F(x)$ and $G(ax+b)$ cross each other at most twice for all $a>0,b\in \mathbb{R}$. He then defined two weakenings of $\leq_2$ in case of finite expectations $\mu_F$ and $\mu_G$ and  finite variances $\sigma_F$ and $\sigma_G$ as follows.
\begin{itemize}
\item
$F\leq_2^* G$ if the standardized distribution functions $F(\sigma_F x+\mu_F)$ and $G(\sigma_G x+\mu_G)$ cross each other exactly once on each side of $x=0$, with $F(\mu_F)\leq G(\mu_G)$.
\item
$F\leq_2^{**} G$ if there exist $a>0,b\in \mathbb{R}$ such that $F(x)$ and $G(a x+b)$ cross each other exactly twice with $F(x)-G(a x+b)$ changing sign from positive to negative to positive.
\end{itemize}
The following implications hold true \citep{oja}:
\begin{equation*}
 F\leq_2 G \quad \Rightarrow \quad F\leq_2^{*} G \quad \Rightarrow \quad F\leq_2^{**} G.
\end{equation*}
Similarly like $F\leq_2^* G$ (see also Def. 2.1 in \cite{macgillivray}), we now define skewness with respect to mean and MAD:

\begin{definition} \label{def-mu-skew}
$G$ is more skew with respect to mean and MAD than $F$ ($F <_\mu^{\delta} G$), if $F(\delta_F x+\mu_F)$ and $G(\delta_G x+\mu_G)$ cross each other exactly once on each side of $x=0$, with $F(\mu_F)\leq G(\mu_G)$.

More generally, $F\leq_\mu^{\delta} G$ if $F <_\mu^{\delta} G$ or $F(\delta_F \cdot +\mu_F)$ and $G(\delta_G \cdot+\mu_G)$ are identical.
\end{definition}

The next theorem shows that this new skewness order is weaker than the strong skewness order $\leq_2$; on the other hand, it is stronger than the skewness order implied by $\tilde{S}_X$ given in (\ref{tildeS}).

\begin{theorem} \label{theorem-A4}
Let $X \sim F$ and $Y \sim G$ with finite expectations $\mu_F=EX$ and $\mu_G=EY$. Then:
\begin{enumerate}
\item[a)]
$ F\leq_2 G \quad \Rightarrow \quad F\leq_\mu^{\delta} G \quad \Rightarrow \quad F\leq_2^{**} G.$
\item[b)]
$ F\leq_\mu^{\delta} G \quad \Rightarrow \quad \tilde{S}_X(t) \leq \tilde{S}_Y(t) \ \forall \, t>0.$ \\
In particular, the skewness measure $\tilde{S}_X(t)$ satisfies skewness property S3 for any $t>0$.
\end{enumerate}
\end{theorem}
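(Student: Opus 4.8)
The plan is to push everything through the mean--MAD standardized cdfs $\tilde F(x)=F(\delta_F x+\mu_F)$ and $\tilde G(x)=G(\delta_G x+\mu_G)$, which are the cdfs of $(X-\mu_F)/\delta_F$ and $(Y-\mu_G)/\delta_G$ and therefore both have mean $0$ and MAD $1$. From mean $0$ and MAD $1$ (so $E\tilde X_+=E\tilde X_-=1/2$) one reads off $\int_{-\infty}^{0}\tilde F=\int_{-\infty}^{0}\tilde G=1/2$ and $\int_{0}^{\infty}\bar{\tilde F}=\int_0^\infty \bar{\tilde G}=1/2$, whence
\[
\int_{-\infty}^{0}(\tilde F-\tilde G)\,du=0 \quad\text{and}\quad \int_{0}^{\infty}(\tilde F-\tilde G)\,du=0 .
\]
These two balanced half-mass identities, which are special to the MAD normalization, are the engine of both parts.

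For the first implication of a), I use that the convex transform order is invariant under increasing affine maps, so $F\leq_2 G$ gives $\tilde F\leq_2\tilde G$, i.e. $\tilde\psi:=\tilde G^{-1}\circ\tilde F$ is convex and increasing. Since $\tilde\psi$ is increasing, $\operatorname{sign}(\tilde F(x)-\tilde G(x))=\operatorname{sign}(\tilde\psi(x)-x)$, and as $\tilde\psi(x)-x$ is convex it has at most two zeros with sign pattern $+,-,+$; hence $\tilde F-\tilde G$ changes sign at most twice in the order $+,-,+$. Feeding this into the two balanced half-mass identities forces, unless $\tilde F\equiv\tilde G$, exactly one sign change in $(-\infty,0)$ and one in $(0,\infty)$, because a function of constant sign on a half-line cannot integrate to zero there; moreover $0$ lies in the central ``$-$'' block, so $\tilde F(0)\leq\tilde G(0)$, i.e. $F(\mu_F)\leq G(\mu_G)$. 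This is exactly $F\leq_\mu^{\delta} G$. For the second implication, I would set $a=\delta_G/\delta_F>0$ and $b=\mu_G-\delta_G\mu_F/\delta_F$; then $F(z)-G(az+b)=\tilde F(u)-\tilde G(u)$ with $u=(z-\mu_F)/\delta_F$, so the one-crossing-per-side structure of $F\leq_\mu^{\delta} G$ translates verbatim into two crossings of $F(\cdot)$ and $G(a\cdot+b)$ with sign change $+,-,+$, i.e. $F\leq_2^{**} G$.

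For part b), substituting $z=\delta_X u+\mu_X$ in $S_X(t\delta_X)$ collapses it to $\tilde S_X(t)=\frac1t\int_{-t}^{t}\tilde F(u)\,du-1$, and likewise for $Y$, so that
\[
\tilde S_Y(t)-\tilde S_X(t)=\frac1t\int_{-t}^{t}\big(\tilde G(u)-\tilde F(u)\big)\,du .
\]
Writing $D=\tilde G-\tilde F$ (sign pattern $-,+,-$ with crossings $c_1<0<c_2$), I split at $0$ and use $\int_{-\infty}^0 D=\int_0^\infty D=0$: for $\int_{-t}^0 D$, either $-t\in(c_1,0)$ and $D\geq0$ there, or $-t\leq c_1$ and $\int_{-t}^0 D=-\int_{-\infty}^{-t}D\geq0$ since $D\leq0$ on $(-\infty,c_1)$; the mirror dichotomy at $c_2$ gives $\int_0^t D\geq0$. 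Hence $\int_{-t}^t D\geq0$ for every $t>0$, which is the assertion. Property S3 for $\tilde S_X(\cdot)$ then follows by chaining part a) and part b): $F\leq_2 G\Rightarrow F\leq_\mu^{\delta} G\Rightarrow \tilde S_X(t)\leq\tilde S_Y(t)$.

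The step I expect to be the main obstacle is pinning down the exact crossing geometry in a)---that $\tilde F-\tilde G$ changes sign \emph{exactly once on each side} of $0$ with the correct central orientation, rather than merely at most twice. This is precisely where the two balanced half-mass identities, and hence the choice of the MAD as scale, are indispensable, since variance standardization does not split the mass symmetrically about $0$ in this way. The degenerate cases (standardized cdfs identical, or a crossing located exactly at $0$) should be absorbed into the non-strict order $\leq_\mu^{\delta}$ and flagged explicitly.
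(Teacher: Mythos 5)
Your overall strategy is the paper's own: the same mean--MAD standardization, the same two integral identities (your ``balanced half-mass identities'' are exactly the paper's displays (M1)--(M2), derived there from the formulas for $EX-EY$ and $E|X|-E|Y|$), the same reduction of $\tilde S_X(t)$ to $\frac1t\int_{-t}^{t}\tilde F(u)\,du-1$, and the same area-cancellation dichotomy for part b), which you state correctly and in more detail than the paper (which compresses it into $-A_1=A_2$, $-A_4=A_3$). Your explicit affine map $a=\delta_G/\delta_F$, $b=\mu_G-\delta_G\mu_F/\delta_F$ for the second implication of a) is also exactly what the paper means by ``follows from the definitions''.

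The one genuine gap is in the first implication of a), at precisely the step you flagged as the main obstacle. Your justification for ``exactly one sign change on each side of $0$ unless $\tilde F\equiv\tilde G$'' is that a function of constant sign on a half-line cannot integrate to zero there unless it vanishes. That excludes the configuration ``no sign change on a half-line while $D:=\tilde F-\tilde G\not\equiv 0$ there'', but it does not dispose of the residual case in which $D$ vanishes identically on one half-line (which your own argument forces whenever both sign changes would lie on the same side of $0$) while $D\not\equiv 0$ on the other. In that case your claimed dichotomy fails unless you show $D\equiv 0$ globally, and the sign pattern plus the two zero integrals alone do not give this: a $+,-,+$ pattern confined to one half-line can perfectly well integrate to zero there while $D\equiv 0$ on the other half-line. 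The paper closes exactly this hole with van Zwet's Jensen argument: if $\tilde F\not\equiv\tilde G$, then $\tilde G^{-1}\circ\tilde F$ is strictly convex and $E\tilde X=E\tilde Y=0$ yields $\tilde G^{-1}(\tilde F(0))<0$, i.e.\ the strict inequality $\tilde F(0)<\tilde G(0)$, which contradicts $D\equiv 0$ on either half-line (both have $0$ in their closure, and the cdf's are continuous here). Alternatively, you can repair it inside your own framework: $D\equiv 0$ on a half-line means $\tilde\psi(x)-x=0$ on a nondegenerate interval of the support, and a convex function vanishing on such an interval is nonnegative everywhere by the three-point convexity inequality; hence $D\geq 0$ everywhere, and the two zero-integral identities then force $D\equiv 0$. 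With either patch your proof is complete; note also that the degenerate cases you flagged (identical standardized cdf's, a crossing exactly at $0$) are not the problematic ones --- this residual case is.
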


\begin{remark}
\begin{enumerate}
\item
The proof of Theorem \ref{theorem-A4} is postponed to Section \ref{sec-proofs}; it shows that it is reasonable to require exactly two crossings in Definition \ref{def-mu-skew}. Exactly one crossing can occur only in specific situations where the standardized distribution functions are identical for all values smaller (larger) than zero; in this case, there is no reasonable comparison between the skewness of the two cdf's.
\item
 From Theorem \ref{theorem-A4} and the strong connection between $\tilde{S}_X$ and the skewness measures $s_2(\alpha)$ in (\ref{skew-coef2}) we conjecture that $s_2$ also satisfies property S3. This is reinforced by the validity of S3 for the limiting measure $s_3(\alpha)$, and by numerical computations for specific examples, see Section \ref{sec-sim}.
 \end{enumerate}
\end{remark}

\section{Empirical expectile skewness} \label{sec-esf}

By replacing the theoretical expectiles by empirical ones we obtain the plug-in estimator
\begin{align*}
	\hat{s}_{2, n}(\alpha) = \frac{1}{1 - 2 \alpha} \frac{\hat{e}_n(1 - \alpha) + \hat{e}_n(\alpha) - 2 \bar{x}}{\hat{e}_n(1 - \alpha) - \hat{e}_n(\alpha)}, \quad \alpha \in (0, 1/2)
\end{align*}
and $\hat{s}_{1, n} = \hat{s}_{2, n}(1/4)$. Utilizing the asymptotic normality of a finite number of expectiles (see, e.g., \cite{holzmann}) and the fact that $s_2(\alpha)$ is a differentiable function of expectiles, the delta method yields the following theorem. Preliminarily, we define
\begin{equation*}
	\eta(\tau_1, \tau_2) = E[I_{\tau_1}(e_X(\tau_1), X) I_{\tau_2}(e_X(\tau_2), X)]
\end{equation*}
for $\tau_1, \tau_2 \in (0, 1)$ and
\begin{equation*}
	A(\tau) = (2 \mathbbm{1}_{\{\tau < 1/2\}} - 1) \frac{e_X(1 - \tau) - \mu}{\tau + F(e_X(\tau)) (1 - 2 \tau)}
\end{equation*}
for $\tau \in \{\alpha, 1 - \alpha\}$.

\begin{theorem}
	Let $\alpha \in (0, 1/2)$ and let $X, X_1, X_2, ...$ be iid with cdf $F$ such that $EX^2 < \infty$. Furthermore, let $F$ not have a point mass in $e_X(p)$ for $p \in \{\alpha, 1/2, 1 - \alpha\}$. Then
\begin{equation*}
\sqrt{n} (\hat{s}_{2, n}(\alpha) - s_2(\alpha)) \stackrel{\cal{D}}{\longrightarrow} N(0, \sigma_\alpha^2),
\end{equation*}
where
\begin{align*}
\sigma_\alpha^2 = & \ \frac{4}{(1 - 2 \alpha)^2} \bigg[  & \\
& \frac{4 \eta(1/2, 1/2)}{(e_X(1 - \alpha) - e_X(\alpha))^2} - \frac{4 [A(\alpha) \eta(\alpha, 1/2) + A(1 - \alpha) \eta(1/2, 1 - \alpha)]}{(e_X(1 - \alpha) - e_X(\alpha))^3} \\
& + \frac{(A(\alpha))^2 \eta(\alpha, \alpha) + A(\alpha) A(1 - \alpha) \eta(\alpha, 1 - \alpha) + (A(1 - \alpha))^2 \eta(1 - \alpha, 1 - \alpha)}{(e_X(1 - \alpha) - e_X(\alpha))^4} \bigg].
\end{align*}
\end{theorem}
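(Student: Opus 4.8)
The plan is to write $\hat{s}_{2,n}(\alpha)$ as a smooth transformation of a jointly asymptotically normal vector of empirical expectiles and then apply the multivariate delta method. The first observation is that the sample mean is itself an empirical expectile: since $I_{1/2}(t,x) = \tfrac12(x-t)$, the defining equation $\frac1n\sum_i I_{1/2}(t,X_i)=0$ is solved by $t=\bar x$, so $\hat{e}_n(1/2)=\bar x$. Consequently $\hat{s}_{2,n}(\alpha) = g\bigl(\hat{e}_n(\alpha),\hat{e}_n(1/2),\hat{e}_n(1-\alpha)\bigr)$ with $g(y_1,y_2,y_3) = \frac{1}{1-2\alpha}\,\frac{y_1+y_3-2y_2}{y_3-y_1}$, and $s_2(\alpha)=g\bigl(e_X(\alpha),e_X(1/2),e_X(1-\alpha)\bigr)$. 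I would then invoke the joint central limit theorem for a finite collection of empirical expectiles from \cite{holzmann}: under $EX^2<\infty$ and absence of point mass at the expectiles in question, $\sqrt n\bigl(\hat\theta_n-\theta\bigr)\stackrel{\mathcal{D}}{\longrightarrow} N(0,\Sigma)$, where $\theta=(e_X(\alpha),e_X(1/2),e_X(1-\alpha))$ and $\hat\theta_n$ is its empirical version. The moment assumption guarantees that the entries $\eta(\tau_j,\tau_k)$, which are second moments of the functions $I_\tau$ (piecewise linear in $X$), are finite.

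Second, I would identify $\Sigma$ explicitly through the influence function of an expectile. Viewing $\hat{e}_n(\tau)$ as the $Z$-estimator solving $\frac1n\sum_i I_\tau(t,X_i)=0$, its linearization reads $\sqrt n(\hat{e}_n(\tau)-e_X(\tau)) = \frac{1}{D(\tau)}\,\frac{1}{\sqrt n}\sum_i I_\tau(e_X(\tau),X_i)+o_P(1)$, with $D(\tau) = -\frac{d}{dt}E I_\tau(t,X)\big|_{t=e_X(\tau)}$. Using $\frac{d}{dt}E(X-t)_+ = -(1-F(t))$ and $\frac{d}{dt}E(X-t)_- = F(t)$ for $F$ without mass at $t$, one gets $D(\tau)=\tau+F(e_X(\tau))(1-2\tau)$, precisely the denominator appearing in $A(\tau)$. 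Since $E I_\tau(e_X(\tau),X)=0$ by the first-order condition, these influence functions are centered, and stacking them yields $\Sigma_{jk}=\eta(\tau_j,\tau_k)/\bigl(D(\tau_j)D(\tau_k)\bigr)$; in particular $D(1/2)=1/2$ recovers the usual sample-mean variance for the middle coordinate. This is also where the no-point-mass hypotheses enter: they make $t\mapsto E I_\tau(t,X)$ differentiable at the true expectile, so that $D(\tau)$ is well defined and nonzero and the above expansion is valid.

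Third, I would apply the delta method. The map $g$ is continuously differentiable at $\theta$ because $e_X(1-\alpha)-e_X(\alpha)>0$ by strict monotonicity of expectiles (Proposition \ref{exp_properties} c)) and $1-2\alpha>0$. Writing $\Delta=e_X(1-\alpha)-e_X(\alpha)$, direct differentiation gives $\partial_{y_1} g = \frac{2(e_X(1-\alpha)-\mu)}{(1-2\alpha)\Delta^2}$, $\partial_{y_3} g = \frac{2(\mu-e_X(\alpha))}{(1-2\alpha)\Delta^2}$ and $\partial_{y_2} g = \frac{-2}{(1-2\alpha)\Delta}$. Setting $\gamma_j=\partial_{y_j} g/D(\tau_j)$ and recognizing $A(\alpha)=(e_X(1-\alpha)-\mu)/D(\alpha)$ together with $A(1-\alpha)=(\mu-e_X(\alpha))/D(1-\alpha)$ — the sign factor $2\mathbbm{1}_{\{\tau<1/2\}}-1$ accounting exactly for the two orientations — the asymptotic variance $\sigma_\alpha^2=\nabla g^\top\Sigma\nabla g=\sum_{j,k}\gamma_j\gamma_k\,\eta(\tau_j,\tau_k)$ can be collected, the common factor $2/(1-2\alpha)$ in each $\gamma_j$ producing the overall prefactor $4/(1-2\alpha)^2$, into the stated form of $\sigma_\alpha^2$.

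Finally, I expect the genuine obstacle to lie entirely in the joint CLT of the first step rather than in the subsequent algebra: because $I_\tau(\cdot,x)$ has a kink at $t=x$, the expectile estimators are nonsmooth $M$-estimators whose limit law cannot be obtained by naive Taylor expansion but requires stochastic-equicontinuity / empirical-process arguments, and it is there that the moment condition $EX^2<\infty$ and the absence of atoms at the three expectiles are indispensable. Since this is furnished by \cite{holzmann}, what remains is careful but routine bookkeeping; the steps most prone to error are the evaluation of $\nabla g$ and the correct handling of the symmetric cross terms $\eta(\tau_j,\tau_k)$ when expanding the quadratic form.
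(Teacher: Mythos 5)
Your approach coincides with the paper's own proof, which is in fact only sketched there: the paper obtains the theorem precisely by combining the joint asymptotic normality of finitely many empirical expectiles from \cite{holzmann} with the delta method. Your fleshing-out of that sketch is sound: the observation $\hat{e}_n(1/2)=\bar{x}$, the sandwich covariance $\Sigma_{jk}=\eta(\tau_j,\tau_k)/(D(\tau_j)D(\tau_k))$ with $D(\tau)=\tau+F(e_X(\tau))(1-2\tau)$, the role of the no-atom and second-moment assumptions, the gradient of $g$, and the identification of $D(\alpha)$, $D(1-\alpha)$ with the denominators of $A(\alpha)$, $A(1-\alpha)$ are all correct.

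However, your final step asserts that the quadratic form $\nabla g^\top\Sigma\,\nabla g$ ``can be collected into the stated form of $\sigma_\alpha^2$'', and this is not literally true. Writing $\Delta=e_X(1-\alpha)-e_X(\alpha)$ and expanding, each off-diagonal pair occurs twice, so the $(\alpha,1-\alpha)$ contribution inside the bracket is $2A(\alpha)A(1-\alpha)\eta(\alpha,1-\alpha)/\Delta^4$, whereas the theorem displays this term with coefficient $1$. (The $\eta(\cdot,1/2)$ terms do agree, because the displayed coefficient $-4$ already absorbs the symmetry factor $2$ together with $1/D(1/2)=2$; the diagonal terms agree trivially.) So either the displayed variance contains a typo in this one cross term --- which an honest execution of your computation would expose --- or an additional argument is needed to produce the discrepant factor. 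You yourself single out ``the correct handling of the symmetric cross terms'' as the step most prone to error, but then do not carry out the check; that check is exactly where your write-up and the stated formula part ways. Apart from this bookkeeping point, which concerns the statement rather than the method, the proof is complete.
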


It can also be shown that $\hat{s}_{2, n}(\alpha)$ is a strongly consistent estimator for $s_2(\alpha)$ for all $\alpha \in (0, 1/2)$. To see this, we consider Theorem 2 in \citet{holzmann}, which implies strong consistency of $\hat{e}_n(\tau)$ for $e_X(\tau)$ for all $\tau \in (0, 1)$ under weak assumptions. Using Slutzky's Theorem, the almost sure convergence then also holds for any continuous function of any finite number of expectiles, yielding the following corollary.

\begin{corollary}
	Let $\alpha \in (0, 1/2)$ and let $X, X_1, X_2, ...$ be iid with cdf $F$ such that $E|X| < \infty$. Then, $\hat{s}_{2, n}(\alpha)$ is strongly consistent for $s_2(\alpha)$, i.e. $\hat{s}_{2, n}(\alpha) \stackrel{a.s.}{\longrightarrow} s_2(\alpha)$.
\end{corollary}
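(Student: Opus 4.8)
The plan is to exploit the fact that $\hat{s}_{2,n}(\alpha)$ is a fixed continuous function of a three-dimensional vector of empirical location functionals, each of which converges almost surely to its population counterpart. Writing
\[
g(a, b, c) = \frac{1}{1 - 2\alpha}\,\frac{a + c - 2b}{a - c},
\]
we have $\hat{s}_{2,n}(\alpha) = g(\hat{e}_n(1-\alpha), \bar{x}, \hat{e}_n(\alpha))$ and $s_2(\alpha) = g(e_X(1-\alpha), \mu, e_X(\alpha))$, so it suffices to establish the componentwise almost sure convergence and then transfer it through $g$.

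For the two outer components, I would invoke Theorem 2 in \citet{holzmann}, which guarantees $\hat{e}_n(\tau) \stackrel{a.s.}{\longrightarrow} e_X(\tau)$ for every fixed $\tau \in (0,1)$ whenever $E|X| < \infty$; applying this at $\tau = \alpha$ and $\tau = 1 - \alpha$ delivers the required convergence. For the middle component, note that $\bar{x}$ is precisely the empirical $1/2$-expectile (the identification function $I_{1/2}$ is linear, so its empirical zero is the sample mean), and $\bar{x} \stackrel{a.s.}{\longrightarrow} \mu = e_X(1/2)$ by the strong law of large numbers under $E|X| < \infty$. Hence the random vector $(\hat{e}_n(1-\alpha), \bar{x}, \hat{e}_n(\alpha))$ converges almost surely to $(e_X(1-\alpha), \mu, e_X(\alpha))$.

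It then remains to pass the convergence through $g$. The map $g$ is continuous at every point $(a, b, c)$ with $a \neq c$. At the limiting point, Proposition \ref{exp_properties}c) asserts that $e_X(\cdot)$ is strictly increasing, whence $e_X(1-\alpha) > e_X(\alpha)$ because $\alpha < 1/2 < 1 - \alpha$; thus the denominator $e_X(1-\alpha) - e_X(\alpha)$ is strictly positive and the limit lies in the domain of continuity of $g$. Applying the continuous mapping theorem to the almost surely convergent vector yields $\hat{s}_{2,n}(\alpha) \stackrel{a.s.}{\longrightarrow} s_2(\alpha)$, as claimed.

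I do not expect a genuine obstacle here: the substantive content is delegated to the strong consistency of empirical expectiles from \citet{holzmann} together with the strong law of large numbers. The only point requiring care is verifying that the limiting argument of $g$ avoids the singular set $\{a = c\}$, and this is exactly where the strict monotonicity of expectiles in $\alpha$ (Proposition \ref{exp_properties}c)) is needed, guaranteeing a strictly positive denominator so that $g$ is continuous at the relevant point.
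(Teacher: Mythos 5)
Your proposal is correct and follows essentially the same route as the paper: strong consistency of the empirical expectiles from Theorem 2 of \citet{holzmann}, combined with almost sure convergence of continuous functions of finitely many expectiles (the paper invokes Slutzky's theorem where you invoke the continuous mapping theorem). Your additional care in treating $\bar{x}$ as the empirical $1/2$-expectile and in verifying, via Proposition \ref{exp_properties}c), that the limit avoids the singular set $\{a=c\}$ simply makes explicit details the paper leaves implicit.
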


In order to obtain the plug-in estimator $\widehat{\sigma}_\alpha^2$ of $\sigma_\alpha^2$, the expectiles $e_X(\tau)$ are replaced by the empirical expectiles $\hat{e}_n(\tau)$. Moreover, $\eta(\tau_1, \tau_2)$ and $A(\tau)$ are estimated by
\begin{equation*}
	\hat{\eta}_n(\tau_1, \tau_2) = \frac{1}{n} \sum_{i = 1}^{n} I_{\tau_1}(\hat{e}_n(\tau_1), X_i) I_{\tau_2}(\hat{e}_n(\tau_2), X_i)
\end{equation*}
and
\begin{equation*}
	\hat{A}_n(\tau) = (2 \mathbbm{1}_{\{\tau < 1/2\}} - 1) \frac{\hat{e}_n(1 - \tau) - \bar{X}}{\tau + \hat{F}_n(\hat{e}_n(\tau)) (1 - 2 \tau)},
\end{equation*}
where $\hat{F}_n$ denotes the empirical cdf. It is then easy to see that $\widehat{\sigma}_\alpha^2$ is a composition of consistent estimators, hence $\widehat{\sigma}_\alpha^2$ itself is a consistent estimator of $\sigma_\alpha^2$. Consequently, an asymptotic confidence interval for $s_2(\alpha)$ with confidence level $1 - p$ is given by
\begin{equation}  \label{eqn:limits-exl-skewness}
	\hat{s}_{2, n}(\alpha) - \frac{\widehat{\sigma}_\alpha}{\sqrt{n}} z_{1 - p/2} \leq s_2(\alpha) \leq \hat{s}_{2, n}(\alpha) + \frac{\widehat{\sigma}_\alpha}{\sqrt{n}} z_{1 - p/2},
\end{equation}
where $z_q$ denotes the $q$-quantile of the standard normal distribution. The left panel in Figure \ref{fig:SE-limits} shows a plot of $\hat{s}_{2, n}(\alpha)$ with (pointwise) confidence limits for a sample of size $n=50$ from an exponential distribution with rate 1.

\begin{figure}
	\centering{
		\includegraphics[scale=0.44]{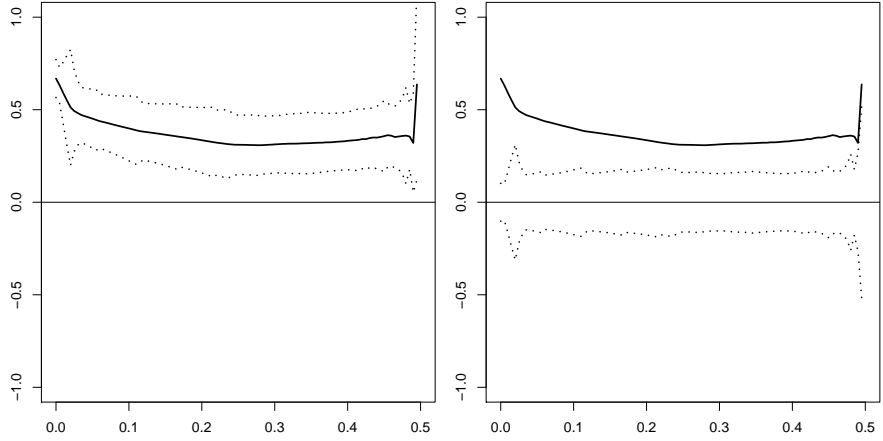}
		\caption{\label{fig:SE-limits} Left panel: Plot of $\hat{s}_{2, n}(\alpha)$ with (pointwise) 95\%-confidence limits defined in (\ref{eqn:limits-exl-skewness}) as dotted line.
			Right panel: Plot of $\hat{s}_{2, n}(\alpha)$ with limits under the assumption of symmetry defined in (\ref{eqn:SE-limits-H0}) as dotted line.}
	}
\end{figure}

Under the hypothesis of symmetry, $s_2(\alpha) \equiv 0$.
Therefore,
\begin{equation}  \label{eqn:SE-limits-H0}
\lim_{n\to\infty} P\left( - \frac{ \widehat{\sigma}_\alpha }{ \sqrt{n} } \cdot z_{1-p/2}
\leq \hat{s}_{2, n}(\alpha) \leq \frac{ \widehat{\sigma}_\alpha }{ \sqrt{n} } \cdot z_{1-p/2} \right) = 1-\alpha,
\end{equation}
which define confidence limits under the hypothesis of symmetry. The right panel in Figure \ref{fig:SE-limits} shows a plot of $\hat{s}_{2, n}(\alpha)$ together with the limits given in (\ref{eqn:SE-limits-H0}) for the same data set as in the left panel.

\subsection{The empirical skewness function}

Again, we use the plug-in estimator for $S(t)$, which is given by
\begin{align*}
S_n(t) &= \frac{1}{nt} \sum_{i=1}^n \left\{ (X_i-\bar{X}-t)_+ - (X_i-\bar{X}+t)_+ \right\} + 1.
\end{align*}
We have the following result, whose proof is again postponed to Section \ref{sec-proofs}.

\begin{theorem} \label{asym-skew-function}
Let $X_1, X_2,\ldots$ be iid with continuous cdf $F$ and $EX^2 < \infty$. Then,
  \begin{align*}
\sqrt{n} \left( S_n(t)-S(t) \right) & \stackrel{\cal{D}}{\longrightarrow} N(0, \sigma_t^2),
\end{align*}
where
\begin{align*}
\sigma_t^2 &= \frac{1}{t^2} \cdot Var\left( (X_1-\mu-t)_+ - (X_1-\mu+t)_+
 + (X_1-\mu) \left(F(\mu+t)-F(\mu-t)\right) \right).
\end{align*}
\end{theorem}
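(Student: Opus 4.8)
The plan is to treat $S_n(t)$ as a plug-in statistic in which the unknown centering $\mu$ is replaced by the sample mean $\bar X$, and to control the error introduced by this random centering. Write
\[
\psi_s(y) = \frac{1}{t}\left\{ (y-s-t)_+ - (y-s+t)_+ \right\}, \qquad
m(s) = E\,\psi_s(X) = \frac{1}{t}\left\{ \pi_X(s+t) - \pi_X(s-t) \right\},
\]
so that $S_n(t) = \frac1n\sum_{i=1}^n \psi_{\bar X}(X_i) + 1$ and $S(t) = m(\mu) + 1$. I would then split
\[
\sqrt{n}\,(S_n(t)-S(t)) = \underbrace{\sqrt{n}\Big(\tfrac1n\textstyle\sum_i \psi_{\bar X}(X_i) - m(\bar X)\Big)}_{(\mathrm{I})} + \underbrace{\sqrt{n}\,\big(m(\bar X)-m(\mu)\big)}_{(\mathrm{II})}.
\]

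The term $(\mathrm{II})$ is the easy, smooth part. Since $F$ is continuous, the stop-loss transform is differentiable with $\pi_X'(u) = -(1-F(u))$, hence $m$ is differentiable with $m'(s) = \frac1t\{F(s+t)-F(s-t)\}$. A first-order Taylor expansion of $m$ around $\mu$, together with $\bar X - \mu = O_p(n^{-1/2})$, gives
\[
(\mathrm{II}) = m'(\mu)\,\sqrt{n}\,(\bar X - \mu) + o_p(1) = \frac{1}{t}\big(F(\mu+t)-F(\mu-t)\big)\cdot\frac{1}{\sqrt{n}}\sum_{i=1}^n (X_i-\mu) + o_p(1).
\]

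The term $(\mathrm{I})$ is the delicate one and is where I expect the main obstacle to lie: the maps $y\mapsto\psi_s(y)$ have kinks, so they cannot be linearized pointwise in $s$, and one must instead argue that the random centering $\bar X$ may be replaced by $\mu$ without asymptotic cost. I would introduce the empirical process $\nu_n(s) = \frac{1}{\sqrt n}\sum_i(\psi_s(X_i)-m(s))$ and aim to show $\nu_n(\bar X) - \nu_n(\mu) = o_p(1)$, i.e. $(\mathrm{I}) = \nu_n(\mu) + o_p(1)$. The key structural facts are that the $\psi_s$ are uniformly bounded (they take values in $[-2,0]$) and uniformly Lipschitz in $s$ (with constant $1/t$, since $\partial_s\psi_s(y) = \frac1t\mathbbm{1}_{\{s-t<y\le s+t\}}$), so $\{\psi_s : |s-\mu|\le\varepsilon\}$ is a Donsker class. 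Asymptotic equicontinuity of $\nu_n$ on this class, combined with the consistency $\bar X\xrightarrow{p}\mu$, then yields the claim; this stochastic-equicontinuity step is the crux of the argument.

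Combining the two terms gives the asymptotically linear representation
\[
\sqrt{n}\,(S_n(t)-S(t)) = \frac{1}{\sqrt{n}}\sum_{i=1}^n\Big\{\psi_\mu(X_i) - m(\mu) + \tfrac1t(X_i-\mu)\big(F(\mu+t)-F(\mu-t)\big)\Big\} + o_p(1),
\]
whose summands are i.i.d. with mean zero (as $E\psi_\mu(X)=m(\mu)$ and $E(X-\mu)=0$). The assumption $EX^2<\infty$ guarantees finite variance of the summand, so the classical central limit theorem gives convergence to $N(0,\sigma_t^2)$, where $\sigma_t^2$ is the variance of the bracketed expression. Dropping the additive constant $m(\mu)$ and factoring out $1/t$ from $\psi_\mu(X_i) = \frac1t[(X_i-\mu-t)_+ - (X_i-\mu+t)_+]$ reproduces exactly the stated expression for $\sigma_t^2$.
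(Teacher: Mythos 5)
Your proposal is correct, and it reaches the same asymptotic linearization as the paper, but by a genuinely different route in the one step that matters. The paper decomposes $\sqrt{n}(S_n(t)-S(t))$ into the oracle centered sum $T_{n1}$ (with $\mu$ in place of $\bar X$) plus correction terms $T_{n2}, T_{n3}$ collecting the differences $(X_i-\bar X\mp t)_+-(X_i-\mu\mp t)_+$, and then disposes of these corrections by elementary hand computations: splitting each into indicator pieces such as $\mathbbm{1}\{\bar X+t\le X_i<\mu+t\}$, bounding the remainders by products of the form $O_p(1)\,o_p(1)$, and extracting the linear term $-\sqrt{n}(\bar X-\mu)\,(1-F(\mu\pm t))/t$ directly. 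Your decomposition is different: you separate the smooth drift $\sqrt{n}\,(m(\bar X)-m(\mu))$, handled by a Taylor/delta-method step using $\pi_X'(u)=-(1-F(u))$, from the fluctuation term $\nu_n(\bar X)-\nu_n(\mu)$, which you kill by stochastic equicontinuity of the Donsker class $\{\psi_s: |s-\mu|\le\varepsilon\}$ (justified by the uniform bound $\psi_s\in[-2,0]$ and the uniform Lipschitz property $|\psi_s(y)-\psi_{s'}(y)|\le |s-s'|/t$, which also gives the needed $L^2(P)$-semimetric convergence $\|\psi_{\bar X}-\psi_\mu\|_{L^2(P)}\le |\bar X-\mu|/t \to_p 0$). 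In effect, the paper's indicator-function estimates prove, by hand and in this special case, exactly the equicontinuity-plus-drift statement you import from empirical process theory (e.g.\ van der Vaart's Lemma 19.24 and his Lipschitz-in-parameter Donsker example). What the paper's route buys is a self-contained, elementary proof with no external machinery; what yours buys is brevity, a transparent separation of the two sources of error from the random centering, and immediate generalizability (to other nuisance centerings, or to uniformity in $t$). Both end with the same i.i.d.\ mean-zero representation, whose summands are square-integrable because $\psi_\mu$ is bounded and $EX^2<\infty$, so the classical CLT yields $N(0,\sigma_t^2)$ with precisely the stated variance.
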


\begin{figure}
\centering{
\includegraphics[scale=0.5]{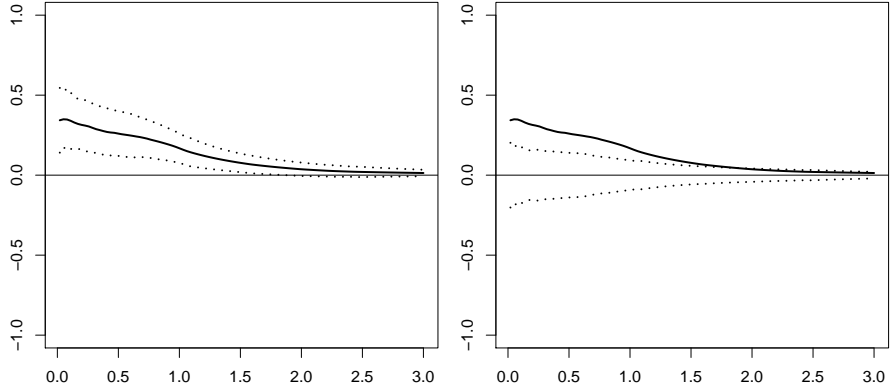}
\caption{\label{fig1} Left panel: Plot of $S_n(t)$ with (pointwise) 95\%-confidence limits defined in (\ref{limits-skewness-function}) as dotted line.
Right panel: Plot of $S_n(t)$ with limits under the assumption of symmetry defined in (\ref{limits-H0}) as dotted line.}
}
\end{figure}

The plug-in estimator for $\sigma_t^2$ is
\begin{eqnarray*}
\widehat{\sigma}_t^2
&= & \frac{1}{nt^2} \sum_{i=1}^n \left( (X_i-\bar{X}-t)_+ - (X_i-\bar{X}+t)_+
  + (X_i-\bar{X}) \, \hat{p}_t \, \right)^2  \\
&& - \left\{ \frac{1}{nt} \sum_{i=1}^n \left( (X_i-\bar{X}-t)_+ - (X_i-\bar{X}+t)_+ \right) \right\}^2 \, ,
\end{eqnarray*}
where
$\ \hat{p}_t = 1/n \sum_{i=1}^{n} \mathbbm{1} \{ \mu-t<X_i\leq \mu+t \}.$
Analogous to the expectile skewness $s_2(\alpha)$, an asymptotic confidence interval for $S(t)$ with confidence level $1-p$ is given by
\begin{equation}  \label{limits-skewness-function}
S_n(t) - \frac{ \widehat{\sigma}_t }{ \sqrt{n} } \cdot z_{1-p/2} \leq S(t) \leq S_n(t) + \frac{ \widehat{\sigma}_t }{ \sqrt{n} } \cdot z_{1-p/2}.
\end{equation}
Exemplary, (pointwise) confidence limits for a sample of size 50 from an exponential distribution with rate $1$ are given in the left panel of Figure \ref{fig1}.

As before, the hypothesis of symmetry yields $S(t) \equiv 0$ and, thereby,
\begin{equation}  \label{limits-H0}
\lim_{n\to\infty} P\left( - \frac{ \widehat{\sigma}_t }{ \sqrt{n} } \cdot z_{1-\alpha/2}
\leq S_n(t) \leq \frac{ \widehat{\sigma}_t }{ \sqrt{n} } \cdot z_{1-\alpha/2} \right) = 1-\alpha,
\end{equation}
the respective confidence limits are given in the right panel of Figure \ref{fig1}.

\section{Expectile and quantile skewness for some families of distributions} \label{sec-sim}

\subsection{Comparison of theoretical values}
In this section we examine how the expectile skewness $s_2(\alpha)$ behaves for specific families of continuous distributions, in particular for the gamma distribution. We analyse how the skewness values depend on $\alpha$ as well as on the distributional parameters. The results are compared with the corresponding values of the quantile skewness $b_2(\alpha)$. Due to property S1, skewness does only depend on the shape parameter of the gamma distribution, but not on the scale parameter. Figure \ref{fig:ThSkewGammaAlpha} depicts skewness as function of $\alpha$ for gamma distributions with different shape parameters.

\begin{figure}
	\centering{
		\includegraphics[scale=0.5]{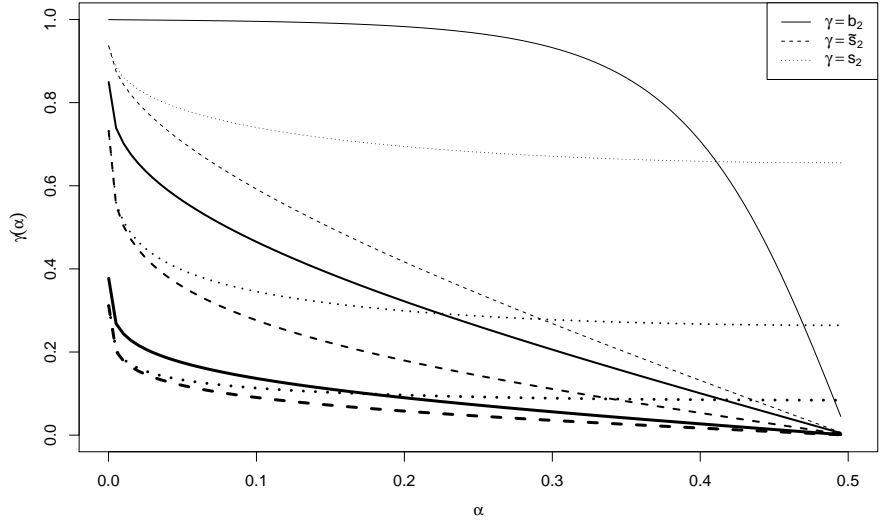}
		\caption{\label{fig:ThSkewGammaAlpha} Expectile and quantile skewness as  function of $\alpha$, for gamma distributions with shape parameters $k = 0.1$, $1$ and $10$, where the line width increases with the value of $k$.}
	}
\end{figure}

First, we only look at the measures $b_2$ and $\tilde{s}_2$ without the correction term $(1 - 2\alpha)^{-1}$. Both of them tend to $0$ as $\alpha$ tends to $1/2$. All curves are strictly decrasing in $\alpha$.
While the quantile skewness exceeds the diagonal for highly skewed distributions, the expectile skewness is restricted to values below the diagonal, corresponding to Theorem \ref{thm:exlSkewStd}. The curves above the diagonal are concave while the ones underneath are convex.

If the expectile skewness is normalized to $1$, it no longer tends to $0$ as $\alpha$ tends to $1/2$. Instead, the still convex curves flatten out with increasing $\alpha$ after a steep decline close to $0$, illustrating that $s_2'(\alpha)$ converges to zero as $\alpha$ tends to $1/2$.
Since the curves flatten out rather quickly, this implies that the limiting expectile skewness $s_3$ is representative of $s_2(\alpha)$ for a considerable part of the range of $\alpha$.

If the quantile skewness is multiplied with the factor $(1 - 2\alpha)^{-1}$, plots show that it also flattens out toward some limiting value with diminishing gradient as $\alpha$ approaches $1/2$. However, these values are then no longer normalized and can be equal to any real number.\par
The observed behaviour is very similar for other popular classes of skewed distributions like the log-normal and the Weibull distribution. If the underlying distribution is skewed to the left, all considered skewness measures increase in $\alpha$ with the expectile skewness curves being concave as long as they stay above the corresponding lower diagonal.

Now we look at the behaviour of the skewness measures $b_2$ and $s_2$ as functions of the shape parameter of the underlying distribution. For the shape parameter $k$ of the gamma distribution specifically, this is depicted in Figure \ref{fig:ThSkewGammaShape}.

\begin{figure}
\centering{
\includegraphics[scale=0.5]{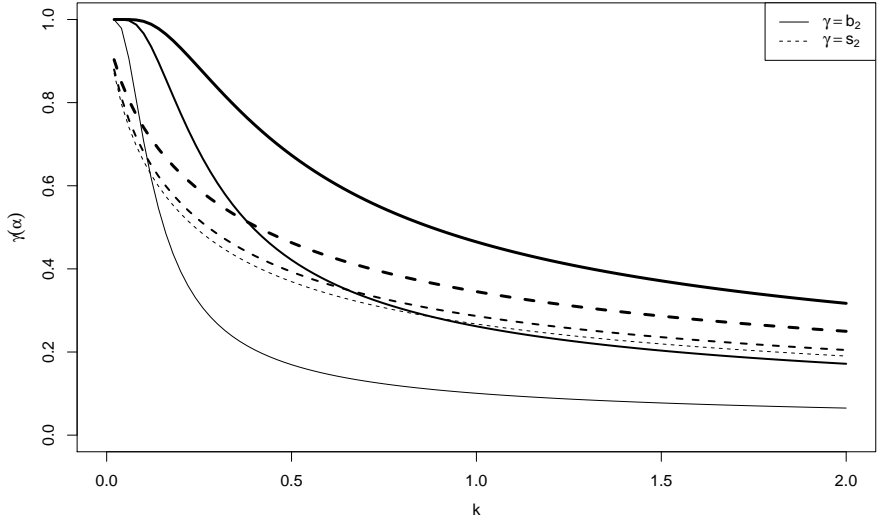}
\caption{\label{fig:ThSkewGammaShape} Expectile and quantile skewness as functions of shape parameter $k$ of the underlying gamma distribution for values $\alpha=0.1,0.25,0.4$, where the line width increase with the value of $\alpha$.}
}
\end{figure}

Both skewness measures decrease in $k$ for all values of $\alpha$. This was to be expected since \citet[pp.60-62]{zwet} showed that $F_{k_1}^{-1} \circ F_{k_2}$ is convex for $k_1 \leq k_2$, where $F_k$ denotes the cdf of the gamma distribution with shape parameter $k$. 
The qualitative behaviour is analogous for similarly ordered classes of distributions like the Weibull distribution, thus strengthening our conjecture that $s_2$ satisfies skewness property S3. The curves of the expectile and quantile skewness differ slightly with the former being strictly convex while the latter become concave for $k$ close to $0$. However, except for very small values of $k$, $b_2$ decreases more rapidly than $s_2$, especially for small values of $\alpha$. The plot also further confirms that the range of $s_2$ for different values of $\alpha$ is substantially smaller than that of $b_2$.

\subsection{Performance of the empirical skewness measures}

In this section, we examine and compare bias and variance of different empirical skewness measures. Here, we include quantile skewness $b_2(\alpha)$, expectile skewness $s_2(\alpha)$, Tajuddin's measure $s_3$ and the moment skewness $\gamma_M = E((X - \mu)/\sigma)^3$. All corresponding empirical measures are obtained as plug-in estimators. Their behaviour is explored for gamma distributions with varying shape parameter $k \in [0.1, 10]$, for the log-normal distribution with fixed log-mean $0$ and varying log-variance $\tau^2 \in [0.01, 2.25]$ as well as for Student's t-distribution with varying degrees of freedom $k \in \mathbb{N} \setminus \{1, 2\} \cup \{\infty\}$ (where $k=\infty$ corresponds to the standard normal distribution). For any skewness measure $\gamma$ with empirical version $\hat{\gamma}_n$, we define the standardized bias, variance and MSE by
\begin{equation*}
\mathrm{sbias}(\hat{\gamma}_n, \gamma) = \frac{\mathrm{bias}(\hat{\gamma}_n, \gamma)}{\gamma}, \quad
\mathrm{sVar}(\hat{\gamma}_n) = \frac{\mathrm{Var}(\hat{\gamma}_n)}{\gamma^2}, \quad
\mathrm{sMSE}(\hat{\gamma}_n, \gamma) = \frac{\mathrm{MSE}(\hat{\gamma}_n, \gamma)}{\gamma^2},
\end{equation*}
if $\gamma \neq 0$, and as their non-standardized versions otherwise. Since the different skewness measures are scaled differently and might also behave differently within the chosen classes of distributions, this makes their biases, variances and MSE's comparable. For the parameter $\alpha$, the values $0.1$, $0.25$ and $0.4$ have been considered, the sample size $n$ varied from $20$ to $10000$, and each simulation  is based on $10000$ repetitions.

\begin{figure}
	\centering{
		\includegraphics[scale=0.45]{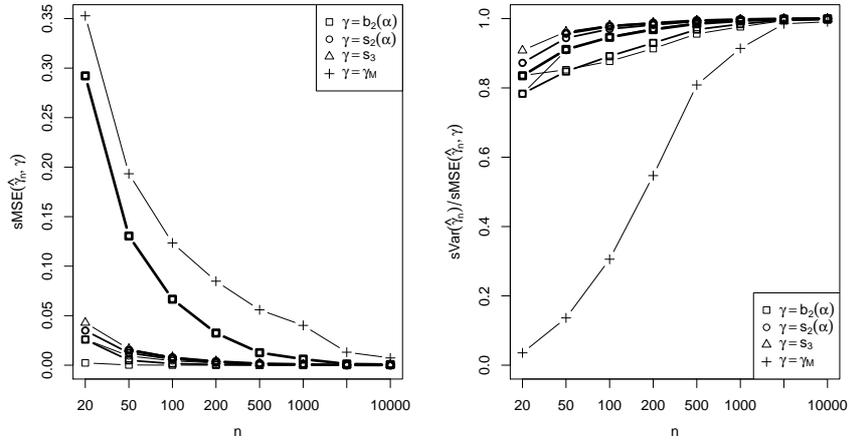}
		\caption{\label{fig:EmpSkewGamma01} Left panel: Standardized MSE's. Right panel: Percentage of the MSE taken up by the variance. Underlying distribution: $\Gamma(0.1, 1)$. Moment skewness: $6.325$. For skewness measures depending on a parameter $\alpha$, increasing line width symbolizes increasing values of $\alpha$.}
	}
\end{figure}

\begin{figure}
	\centering{
		\includegraphics[scale=0.45]{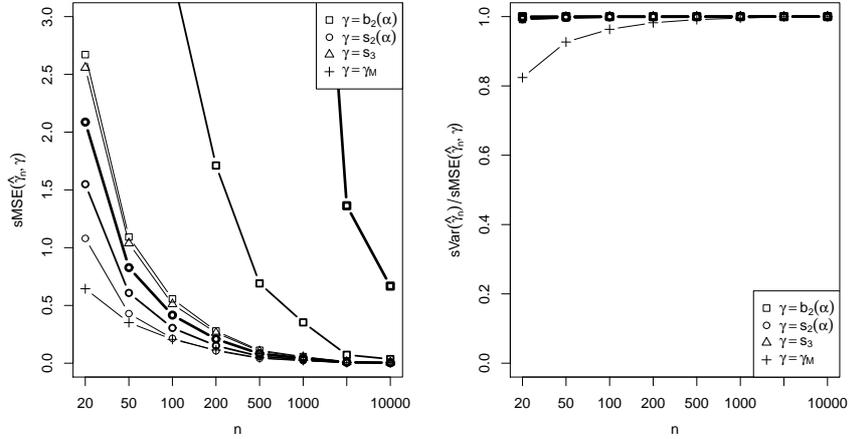}
		\caption{\label{fig:EmpSkewGamma10} Left panel: Standardized MSE's. Right panel: Percentage of the MSE taken up by the variance. Underlying distribution: $\Gamma(10, 1)$. Moment skewness: $0.632$. For skewness measures depending on a parameter $\alpha$, increasing line width symbolizes increasing values of $\alpha$.}
	}
\end{figure}

First, we consider a highly skewed (shape parameter $0.1$, see Figure \ref{fig:EmpSkewGamma01}) and a mildly skewed (shape parameter $10$, see Figure \ref{fig:EmpSkewGamma10}) gamma distribution. We observe that the MSE generally seems to decrease with increasing skewness; however, that decrease is slower for $\gamma_M$ than for the other measures. While $b_2$ and $\gamma_M$ have the highest MSE at either end of the skewness spectrum, the expectile skewness is always in a acceptable range and converges fairly quickly towards $0$ as $n$ increases. While there is almost no bias for the mildly skewed distribution, all measures are at least slightly biased (relative to their variance) for high skewness. For increasing $n$, the bias vanishes. Irrespective of the distributional skewness, $\gamma_M$ is always the most biased measure, for high skewness even to a critical degree.

\begin{figure}
	\centering{
		\includegraphics[scale=0.45]{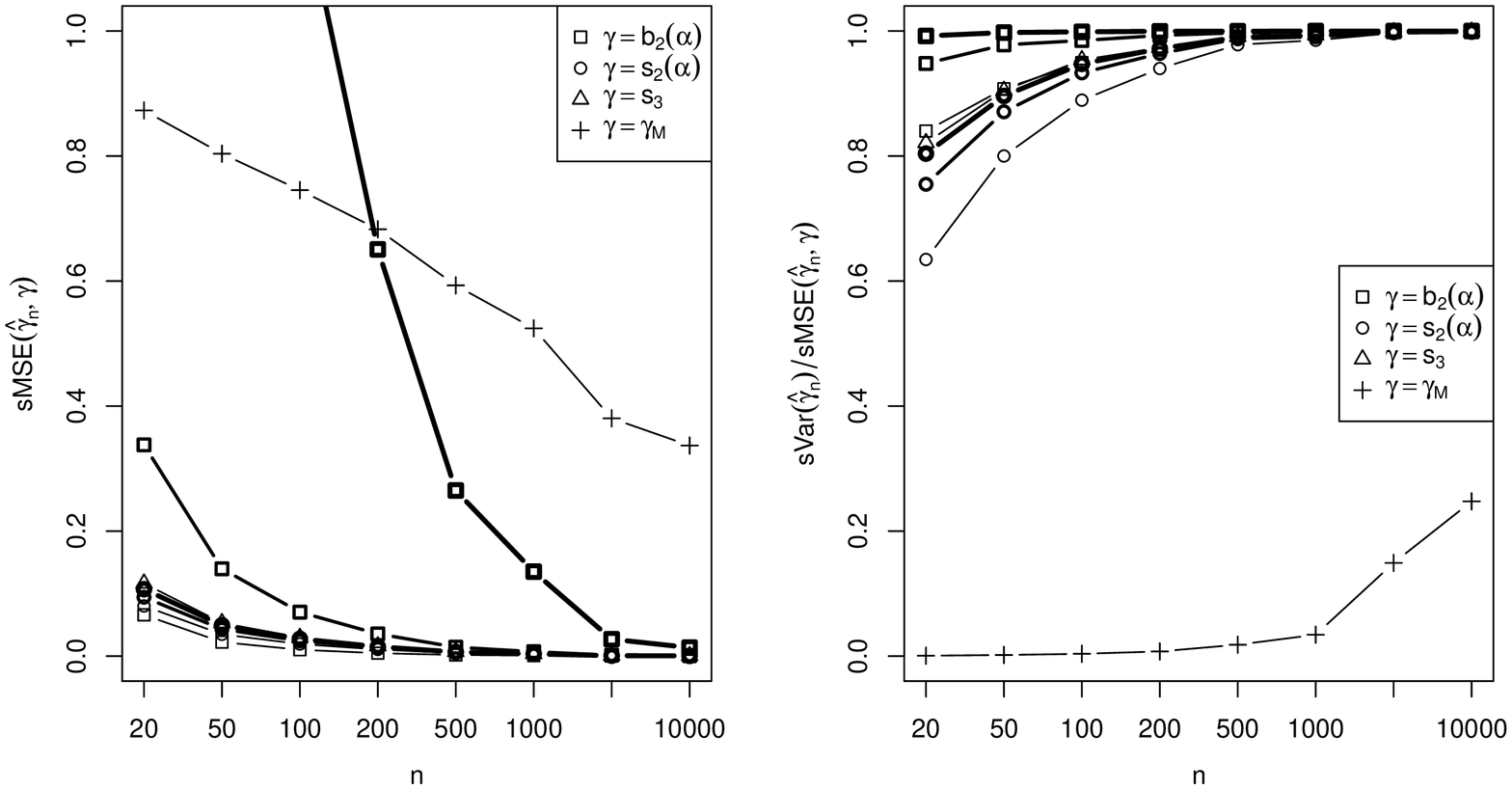}
		\caption{\label{fig:EmpSkewLnorm225} Left panel: Standardized MSE's. Right panel: Percentage of the MSE taken up by the variance. Underlying distribution: $\mathcal{LN}(0, 2.25)$. Moment skewness: $33.468$. For skewness measures depending on a parameter $\alpha$, increasing line width symbolizes increasing values of $\alpha$.}
	}
\end{figure}

\begin{figure}
	\centering{
		\includegraphics[scale=0.45]{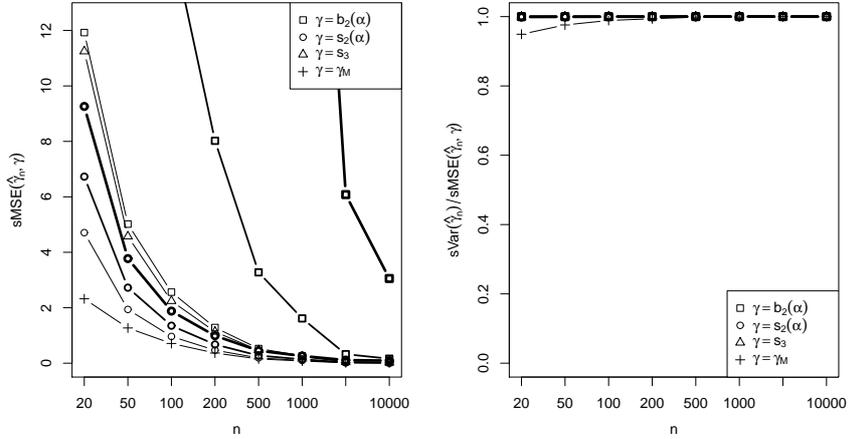}
		\caption{\label{fig:EmpSkewLnorm001} Left panel: Standardized MSE's. Right panel: Percentage of the MSE taken up by the variance. Underlying distribution: $\mathcal{LN}(0, 0.01)$. Moment skewness: $0.302$. For skewness measures depending on a parameter $\alpha$, increasing line width symbolizes increasing values of $\alpha$.}
	}
\end{figure}

The results for the highly skewed (log-variance $2.25$, see Figure \ref{fig:EmpSkewLnorm225}) and the mildly skewed (log-variance $0.01$, see Figure \ref{fig:EmpSkewLnorm001}) log-normal distributions confirm the observations made concerning the gamma distribution. The first log-normal distribution is even more skewed than the first gamma distribution, having the effect that the MSE of $\gamma_M$ is almost completely dominated by the bias. Additionally, the MSE of $\gamma_M$ seems to converge very slowly relative to the other measures, possibly suggesting worse behaviour on heavy-tailed distributions.

\begin{figure}
	\centering{
		\includegraphics[scale=0.45]{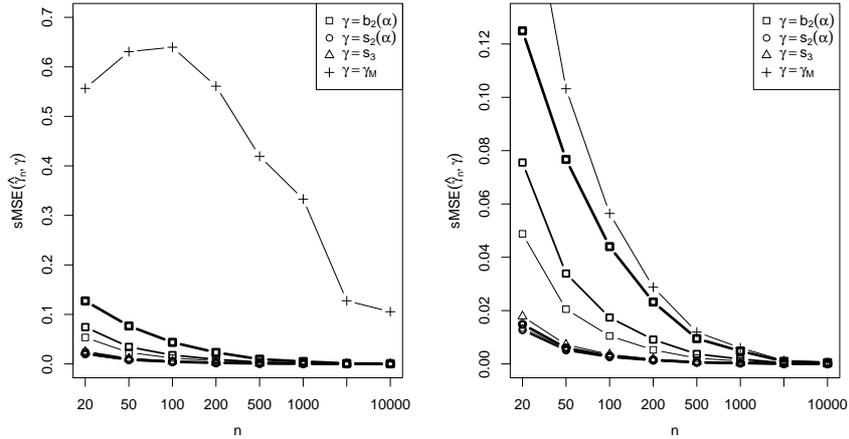}
		\caption{\label{fig:EmpSkewtNorm} Standardized MSE's. Left panel: $t_5$. Right panel: $t_\infty \sim N(0, 1)$. Moment skewness: $0$. For skewness measures depending on a parameter $\alpha$, increasing line width symbolizes increasing values of $\alpha$.}
	}
\end{figure}

Finally, we consider Student's t-distribution and the standard normal distribution (as limiting case) as examples of symmetric distributions. As expected for these distributions, the bias is negligible relative to the variance for all skewness measures. While the MSE basically does not change for $b_2$ and $s_2$ (with the latter achieving even lower values), $\gamma_M$ only behaves nicely for higher degrees of freedom. For lower ones, it becomes somewhat unstable (see Figure \ref{fig:EmpSkewtNorm}), once again showing a poor behaviour for heavy-tailed distributions.

Overall, $s_2(\alpha)$ and $s_3$ seem to be the most stable skewness measures considered here. While they are outperformed for specific distributions, their MSE never explodes, and their bias is always fairly low compared to their variance.

\section{Proofs} \label{sec-proofs}

\begin{proof}[{\slshape Proof of the equivalence of $\le_2$ and (\ref{cto})}]
Convexity of a function $h$ on an interval $I$ is equivalent to
\begin{align} \label{convex-def}
 \frac{h(x)(z-y) + h(y)(x-z) + h(z)(y-x)}{(x-y)(y-z)(z-x)} & \geq 0
\end{align}
for all distinct numbers $x,y,z\in I$ (see, e.g., \citet{artin}).
Equivalently, $h$ is convex, if and only if the numerator of the left hand side of (\ref{convex-def}) is non-negative for all $x<y<z$.

Let $h=G^{-1}\circ F$, where $F$ and $G$  are continuous and strictly increasing on $\{x: 0<F(x)<1\}$ and $\{x: 0<G(x)<1\}$, respectively. Putting $F(x)=u, F(y)=v$ and $F(z)=w$ shows that
\begin{align} \label{convex-def2}
 & G^{-1}(u)(F^{-1}(w)-F^{-1}(v)) + G^{-1}(v)(F^{-1}(u)-F^{-1}(w)) \nonumber \\
 & \hspace{5mm} + G^{-1}(w)(F^{-1}(v)-F^{-1}(u)) \; \geq 0
\end{align}
for all $u<v<w$ is equivalent to the convexity of $G^{-1}\circ F$. Now, a direct computation yields that (\ref{cto}) is equivalent to (\ref{convex-def2}).
\end{proof}

\medskip

\begin{proof}[{\slshape Proof of Proposition \ref{thm:exlSkewStd}}]
Let $X$ be a random variable with cdf $F$ and finite mean $\mu$.
The representation
\begin{equation*}
E(X - t)_+ = \int_{t}^{\infty} (1-F(z)) dz
\end{equation*}
shows that $t \mapsto E(X - t)_+$ is strictly decreasing on $\{t \in \mathbb{R}: F(t) < 1\}$. Similarly, $t \mapsto E(X - t)_-$ is strictly increasing on $\{t \in \mathbb{R}: F(t) > 0\}$.
The mean $\mu = e_X(1/2)$ always lies within the closures of both of these sets.
Since, by Proposition \ref{exp_properties}c), $e_X(\alpha) < \mu < e_X(1 - \alpha)$, we obtain
\begin{align}
			E(X - e_X(1 - \alpha))_+ &< E(X - e_X(\alpha))_+,	\label{eqn:stopLossMon}\\
			E(X - e_X(\alpha))_- &< E(X - e_X(1 - \alpha))_-.	\label{eqn:negStopLossMon}
		\end{align}
		Furthermore, we can rewrite the first order condition for expectiles (\ref{exp_def_2}) for any $\tau \in (0, 1) \setminus \{1/2\}$ in the following two ways
		\begin{align}
			E(X - e_X(\tau))_+ &= \frac{1 - \tau}{1 - 2\tau} (\mu - e_X(\tau)),		\label{eqn:stopLossRewr}\\
			E(X - e_X(\tau))_- &= \frac{\tau}{1 - 2\tau} (\mu - e_X(\tau)).		\label{eqn:negStopLossRewr}
		\end{align}
		Plugging equations (\ref{eqn:stopLossRewr}) and (\ref{eqn:negStopLossRewr}) into inequalities (\ref{eqn:stopLossMon}) and (\ref{eqn:negStopLossMon}) yields
		\begin{align*}
			\alpha (e_X(1 - \alpha) - \mu) &< (1 - \alpha) (\mu - e_X(\alpha)),\\
			\alpha (\mu - e_X(\alpha)) &< (1 - \alpha) (e_X(1 - \alpha) - \mu).
		\end{align*}
Transforming the first inequality yields the upper bound for $\tilde{s}_2(\alpha)$, by transforming the second one we obtain the lower bound.

\medskip		
Let now $X \sim \mathrm{Bin}(1, p)$ for some $p \in (0, 1)$. Some calculations yield
\begin{equation*}
	\tilde{s}_2(\alpha) = (2\alpha - 1) (2 p-1).
\end{equation*}
Hence, $\tilde{s}_2(\alpha) \to 1 - 2\alpha$ for $p \rightarrow 0$ as well as $\tilde{s}_2(\alpha) \to 2\alpha - 1$ for $p \rightarrow 1$. Hence, both inequalities are sharp.
\end{proof}

\medskip

\begin{proof}[{\slshape Proof of Theorem \ref{theorem-A4}}]
From
\begin{equation*}
    EX = \int_0^{\infty} \left( 1-F(x)\right) \, dx - \int_{-\infty}^{0} F(x)\, dx
\end{equation*}
and
\begin{equation*}
    E|X| = \int_0^{\infty} \left( 1-F(x) - F(-x) \right) \, dx
\end{equation*}
we obtain
\begin{eqnarray}
    EX-EY &=& \int_{-\infty}^{\infty} (G-F)(x) \, dx, \label{mean-dif}\\
    E|X|-E|Y| &=& \int_{0}^{\infty} (G-F)(x) \, dx  - \int_{-\infty}^{0} (G-F)(x) \, dx. \label{amean-dif}
\end{eqnarray}
Define $\tilde{X}=(X-\mu_F)/\delta_F$ and $\tilde{Y}=(Y-\mu_G)/\delta_G$ with pertaining cdf's
$\tilde{F}(\cdot)=F(\delta_F \cdot+\mu_F)$ and $\tilde{G}(\cdot)=G(\delta_G \cdot+\mu_G)$. Set $M_{\tilde{G},\tilde{F}}=\tilde{G}(x)-\tilde{F}(x)$.
Since $\mu_{\tilde{F}}=\mu_{\tilde{G}}=0$ and $\delta_{\tilde{F}}=\delta_{\tilde{G}}=1$, (\ref{mean-dif}) and (\ref{amean-dif}) take the form
\begin{eqnarray}
    \int_{-\infty}^{\infty} M_{\tilde{G},\tilde{F}}(x) \, dx &=&0,  \label{M1} \\
    \int_{0}^{\infty} M_{\tilde{G},\tilde{F}}(x) \, dx  &=& \int_{-\infty}^{0} M_{\tilde{G},\tilde{F}}(x)  \, dx.  \label{M2}
\end{eqnarray}

\begin{enumerate}
\item[a)]
Assume $F\leq_2 G$.
Hence, $\tilde{F}$ and $\tilde{G}$ cross each other at most twice.
Since $\mu_{\tilde F}= \mu_{\tilde G}$,  it follows from (\ref{mean-dif}) (and is well-known) that $\tilde F$ and $\tilde G$ are either identical or cross each other at least once.

Now, assume that $M_{\tilde{G},\tilde{F}}\not\equiv 0$. Then, $\tilde{G}^{-1}\tilde{F}$ is strictly convex, and Jensen's inequality implies $\tilde{G}^{-1}\tilde{F}(0)<0$, resulting in $\tilde{F}(0)<\tilde{G}(0)$ (see \cite{zwet}, p.10).

Assume that $M_{\tilde{G},\tilde{F}}$ has exactly one root $x_1$, where $x_1\leq 0$. Put $x_0=-\infty, x_2=0, x_3=\infty$, and
\begin{equation*}
    A_i = \int_{x_{i-1}}^{x_i} M_{\tilde{G},\tilde{F}}(x) \, dx, \quad i=1,2,3.
\end{equation*}
From (\ref{M1}) and (\ref{M2}), we obtain
\begin{equation*}
    A_1+A_2+A_3=0, \qquad A_1+A_2=A_3.
\end{equation*}
Hence, $A_3=0$, which implies that $\tilde{F}(x)=\tilde{G}(x)$ for $x\ge 0$, a contradiction to $\tilde{F}(0)<\tilde{G}(0)$. Since an analogous reasoning excludes a single root
$x_1>0$, it follows that $\tilde{F}$ and $\tilde{G}$ cross each other exactly twice, with $M_{\tilde{G},\tilde{F}}$ changing sign from negative to positive to negative, and $M_{\tilde{G},\tilde{F}}(0)>0$.

The second implication follows from the definitions.
\item[b)]
 Assume $F\leq_\mu^{\delta} G$. Denote the two roots of $M_{\tilde{G},\tilde{F}}$ by $x_1$  and $x_3$, where $x_1<0<x_3$. Further, put $x_0=-\infty, x_2=0, x_4=\infty$, and
\begin{equation*}
    A_i = \int_{x_{i-1}}^{x_i} M_{\tilde{G},\tilde{F}}(x) \, dx, \quad i=1,\ldots,4.
\end{equation*}
From (\ref{M1}) and (\ref{M2}), we obtain
\begin{equation*}
    A_1+A_2+A_3+A_4=0, \qquad A_1+A_2=A_3+A_4
\end{equation*}
(see Figure \ref{fig-FG}). Hence, $-A_1=A_2$, and $-A_4=A_3$, i.e. the area $|A_1|$ equals $A_2$, and $|A_4|$ equals $A_3$.
Consequently,
\begin{equation*}
 \tilde{S}_Y(t) - \tilde{S}_X(t) = \frac1t \int_{-t}^t M_{\tilde{G},\tilde{F}}(t) \, dt  \geq 0.
\end{equation*}
\end{enumerate}
\end{proof}

\begin{figure}
\centering{
\includegraphics[scale=0.45]{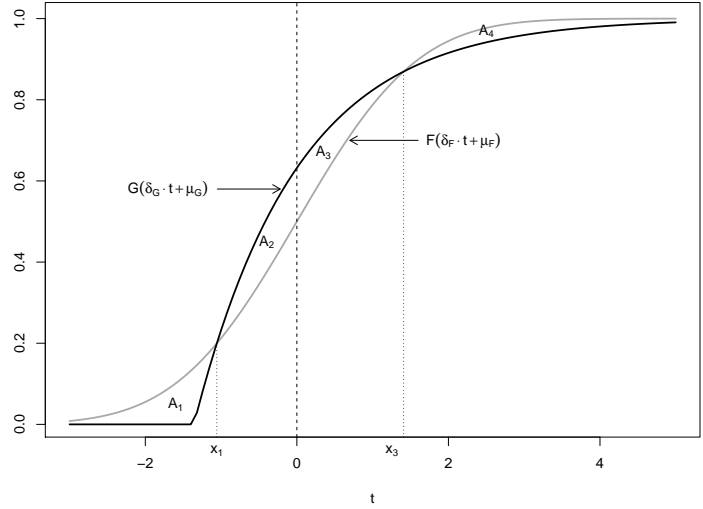}
\caption{\label{fig-FG} Standardized distribution functions with $F\leq_\mu^{\delta} G$.}
}
\end{figure}

\medskip

\begin{proof}[{\slshape Proof of Theorem \ref{asym-skew-function}}]
First, we obtain
\begin{eqnarray}
\lefteqn{ \sqrt{n} \left( S_n(t)-S_X(t)\right) } &&  \nonumber \\
&=& \frac{1}{t\sqrt{n}} \sum_{i=1}^n \left\{ (X_i-\bar{X}-t)_+ - (X_i-\bar{X}+t)_+ - (\pi_X(\mu+t) - \pi_X(\mu-t))  \right\} \nonumber \\
&=& \frac{1}{t\sqrt{n}} \sum_{i=1}^n \left\{ (X_i-\mu-t)_+ - (X_i-\mu+t)_+ - (\pi_X(\mu+t) - \pi_X(\mu-t))  \right\} \nonumber \\
&& + \frac{1}{t\sqrt{n}} \sum_{i=1}^n \left\{ (X_i-\bar{X}-t)_+ - (X_i-\mu-t)_+ \right\} \nonumber \\
&& - \frac{1}{t\sqrt{n}} \sum_{i=1}^n \left\{(X_i-\bar{X}+t)_+ - (X_i-\mu+t)_+ \right\} \nonumber \\
&=& T_{n1} + T_{n2} - T_{n3}, \label{Zerlegung1}
\end{eqnarray}
say. Here, $T_{n2}$ can be written as
\begin{eqnarray*}
T_{n2} &=& \frac{1}{t\sqrt{n}} \sum_{i=1}^n \left\{
(\mu-\bar{X}) \ \mathbbm{1} \{X_i\geq \max(\bar{X}+t,\mu+t)\} \right\} \\
&& + \frac{1}{t\sqrt{n}} \sum_{i=1}^n \left\{
 (X_i-\bar{X}-t) \ \mathbbm{1} \{\bar{X}+t \leq X_i < \mu+t \} \right\} \\
&& - \frac{1}{t\sqrt{n}} \sum_{i=1}^n \left\{
 (X_i-\mu-t) \ \mathbbm{1} \{ \mu+t \leq X_i < \bar{X}+t\} \right\} \\
 &=& T_{n2}^{(1)} + T_{n2}^{(2)} - T_{n2}^{(3)},
\end{eqnarray*}
where $\mathbbm{1}$ denotes the indicator function. Now, we consider  $T_{n2}^{(2)}$.
If $\bar{X} \leq X_i-t \leq \mu$, then $0\leq X_i-t-\bar{X}\leq \mu-\bar{X}$, and, hence,
\begin{eqnarray*}
\left|T_{n2}^{(2)}\right|
&\leq& \sqrt{n}|\bar{X}-\mu| \ \frac{1}{tn} \sum_{i=1}^n
\mathbbm{1} \{\bar{X} \leq X_i-t < \mu \} \\
&=& O_p(1)o_p(1)=o_p(1).
\end{eqnarray*}
Likewise, $T_{n2}^{(3)}=o_p(1)$. Similarly,
\begin{eqnarray*}
\lefteqn{ T_{n2}^{(1)} - \frac{1}{t\sqrt{n}} \sum_{i=1}^n \left\{
(\mu-\bar{X}) \ \mathbbm{1} \{ X_i\geq \mu+t\} \right\} } && \\
&=& \sqrt{n}(\mu-\bar{X}) \ \frac{1}{tn} \sum_{i=1}^n
\mathbbm{1} \{\bar{X} \leq X_i-t \leq \mu \} \ = \ o_p(1).
\end{eqnarray*}
Hence,
\begin{eqnarray*}
T_{n2}^{(1)} &=& \sqrt{n}(\mu-\bar{X}) \, \frac{1}{tn} \sum_{i=1}^n \mathbbm{1} \{X_i\geq \mu+t)\} + o_p(1) \\
&=& \sqrt{n}(\mu-\bar{X}) \, P(X_1 \geq \mu+t)/t + o_p(1).
\end{eqnarray*}
Using $T_{n2}=T_{n2}^{(1)}+o_p(1)$, we get
\begin{eqnarray}
T_{n2} &=& \frac{1}{t\sqrt{n}} \sum_{i=1}^n \left\{ -(X_i-\mu) \, (1-F(\mu+t)) \right\} + o_p(1). \label{Tn2}
\end{eqnarray}
Completely analogous considerations yield
\begin{eqnarray}
T_{n3} &=& \frac{1}{t\sqrt{n}} \sum_{i=1}^n \left\{-(X_i-\mu) \, (1-F(\mu-t)) \right\} + o_p(1). \label{Tn3}
\end{eqnarray}
Plugging (\ref{Tn2}) and (\ref{Tn3}) in (\ref{Zerlegung1}), we finally obtain
\begin{eqnarray*}
\lefteqn{ \sqrt{n} \left( S_n(t)-S_X(t)\right) } && \\
&=& \frac{1}{t\sqrt{n}} \sum_{i=1}^n  \big\{ (X_i-\mu-t)_+ - (X_i-\mu+t)_+
 + (X_i-\mu) (F(\mu+t)-(F(\mu-t)) \\
&& \hspace{16mm} -(\pi_X(\mu+t) - \pi_X(\mu-t))  \big\} + o_p(1),
\end{eqnarray*}
and the central limit theorem yields the assertion.
\end{proof}

\end{document}